\numberwithin{equation}{section}
\newtheorem{Thm}{Theorem}
\newtheorem{Def}{Definition}
\newtheorem{Lm}{Lemma}[section]
\newtheorem{Prop}[Lm]{Proposition}
\newtheorem{Cor}[Lm]{Corollary}
\def\bdef{\begin{Def}}
\def\endef{\end{Def}}
\def\bthm{\begin{Thm}}
\def\ethm{\end{Thm}}
\def\bprop{\begin{Prop}}
\def\enprop{\end{Prop}}
\def\blm{\begin{Lm}}
\def\elm{\end{Lm}}
\def\bcor{\begin{Cor}}
\def\ecor{\end{Cor}}
\def\brm{\begin{Rem}}
\def\erm{\end{Rem}}
\def\bfig{\begin{picture}}
\def\efig{\end{picture}}
\def\be{\begin{eqnarray}}
\def\ee{\end{eqnarray}}
\def\beal{\begin{aligned}}
\def\enal{\end{aligned}}
\newcommand{\G}{{\rm G}}
\def\Z{\mathbb Z}
\def\T{\mathbb T}
\def\C{\mathbb C}
\def\R{\mathbb R}
\def\eps{\varepsilon}
\def\al{\alpha}
\def\dt{\delta}
\def\lb{\lambda}
\def\id{\mathrm{id}}
\def\sP{\mathscr P}
\def\sC{\mathscr C}
\def\cC{\mathcal C}
\def\sH{\mathscr H}
\def\cS{\mathcal S}
\def\cA{\mathcal A}
\def\cD{\mathcal D}
\def\FH{\mathrm{HF}}
\def\FC{\mathrm{CF}}
\newcommand{\iS}{\underleftarrow{\mathstrut{\rm SH}}}
\newcommand{\dS}{\underrightarrow{\mathstrut{\rm SH}}}
\title[Perioidic orbits of Hamiltonians separating Lagrangian tori]{Existence of noncontractible periodic orbits of Hamiltonian systems separating two Lagrangian tori on $T^*\T^n$ with application to non convex Hamiltonian systems}
\author{Jinxin Xue\\
University of Chicago}
\address{Department of mathematics, the University of Chicago, Chicago, IL, US, 60637}
\email{ jxue@math.uchicago.edu}%
\date\today
\begin{document}
\maketitle
\begin{abstract}
In this paper, we show the existence of non-contractible periodic orbits in Hamiltonian systems defined on $T^*\T^n$ separating two Lagrangian tori  under a certain cone assumption. Our result gives a positive answer to a question of Polterovich in \cite{P}. As an application, we find periodic orbits in almost all the homotopy classes on a dense set of energy levels in Lorentzian type mechanical Hamiltonian systems defined on $T^*\T^2$. This solves a problem of Arnold in \cite{A}.
\end{abstract}
\section{Introduction}
In a recent paper \cite{P}, Polterovich proved the existence of an invariant measure $\mu$ for Hamiltonian systems in the following setting. 

Consider a symplectic manifold $(M,\omega)$  and a pair of compact subsets $X, Y \subset M$ with the
following properties:
\begin{itemize}
\item [(P1)] $Y$ cannot be displaced from $X$ by any Hamiltonian diffeomorphism $\theta$:
\[\theta(Y) \cap X\neq  \emptyset \mathrm{\ for\ every\ }\theta\in \mathrm{Ham}(M, \omega).\]
\item [(P2)] There exists a path $\{\phi_t\}, t \in [0, 1],\ \phi_0 = \mathrm{id}$ of symplectomorphisms so that $\phi_1$ displaces $Y$ from $X: \phi_1(Y ) \cap X = \emptyset$.
\end{itemize}
We set $X':=\phi_1(Y)$ and $a:=\mathrm{Flux}(\{\phi_1\})=\int_0^1[i_{\dot\phi_t}\omega]\,dt$.
\begin{Thm}
[Theorem 1.1 of \cite{P}] \label{ThmPolt}
For every $F \in \sC^\infty(M,\R)$ with
\[F|_X \leq 0,\quad F|_{X'} \geq  1,\]
the Hamiltonian flow of $F$ possesses an invariant probability measure $\mu$ with
\begin{equation}\label{EqPolt}
\langle a,\rho(\mu)\rangle\geq 1
\end{equation} 
where $\rho(\mu)$ is the rotation vector of $\mu.$
\end{Thm}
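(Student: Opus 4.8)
I would prove Theorem~\ref{ThmPolt} in two parts: a soft dynamical reduction, and a Floer-theoretic core in which the non-displaceability hypothesis (P1) is actually used.

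\emph{Step 1 (reduction to a winding estimate).} Fix a closed $1$-form $\al$ on $M$ with $[\al]=a$. For every $\phi_F^t$-invariant probability measure $\mu$ one has $\langle a,\rho(\mu)\rangle=\int_M\al(X_F)\,d\mu$, the right-hand side being independent of the representative $\al$ by invariance of $\mu$. Given $x\in M$ and $T>0$, let $\mu_{x,T}$ be the orbit-average measure, $\int f\,d\mu_{x,T}=\frac1T\int_0^T f(\phi_F^s x)\,ds$; then, writing $\gm_{x,T}$ for the orbit arc $s\mapsto\phi_F^s x$, $s\in[0,T]$,
\[
\int_M\al(X_F)\,d\mu_{x,T}=\frac1T\int_0^T\al(X_F)(\phi_F^s x)\,ds=\frac1T\int_{\gm_{x,T}}\al .
\]
So it suffices to produce, for arbitrarily large $T$, a point $x=x_T$ with $\int_{\gm_{x_T,T}}\al\ge T$: choosing $T_n\to\infty$, any weak-$*$ limit $\mu$ of the $\mu_{x_{T_n},T_n}$ is $\phi_F^t$-invariant (and a probability measure, by the compactness/properness assumptions carried along in \cite{P}), whence $\langle a,\rho(\mu)\rangle=\int\al(X_F)\,d\mu\ge 1$. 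The convenient way to generate such arcs is along closed orbits, since for a $T$-periodic orbit $\gm$ one has $\int_\gm\al=\langle a,[\gm]\rangle$, $[\gm]\in H_1(M;\Z)$. Thus it is enough to show: \emph{for arbitrarily large $T$ the flow of $X_F$ has a $T$-periodic orbit $\gm$ with $\langle a,[\gm]\rangle\ge T$} --- or an evident approximate version, which is precisely why a measure, rather than a genuine orbit, is the natural conclusion.

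\emph{Step 2 (the periodic orbits; the crux).} Here (P1) and (P2) enter. Heuristically: the only symplectic isotopies that can displace $Y$ off $X$ have nonzero flux (otherwise one would contradict (P1) via Banyaga's theorem), and (P2) does exactly this, dragging $Y$ across the window $\{0\le F\le 1\}$ into $X'=\phi_1(Y)\subset\{F\ge 1\}$ along an isotopy of flux $a$; if $X_F$ had no winding orbits one could ``unwind'' this motion and thereby displace $Y$ off $X$ Hamiltonianly, contradicting (P1). I would make this precise by Floer homology. First replace $F$ by a sandwich Hamiltonian $\hat F$ that is $\le 0$ near $X$, $\ge 1$ near $X'$, and (modulo additive constants) supported near the transition region; a positive time-reparametrization shows its relevant $1$-periodic orbits yield orbits of $X_F$, so a winding bound for $\hat F$ transfers. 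Now work with the Hamiltonian Floer homology of $\hat F$ with Novikov-type coefficients recording the pairing $\langle a,\cdot\rangle$ of loop classes against $a$ (equivalently, one free-homotopy class of loops at a time). I expect (P1)--(P2) to force the continuation homomorphism from a low-action, low-winding window into a high-winding window of this twisted Floer homology to be nonzero; then the usual spectral-invariant mechanism represents the surviving class by a $1$-periodic orbit of $\hat F$ whose class $\sigma$ obeys $\langle a,\sigma\rangle\ge\min_{X'}F-\max_X F\ge 1$. Running the same argument for $TF$ --- whose time-$1$ flow is the time-$T$ flow of $F$ and whose window has width $\ge T$ --- gives, for each large $T$, a $T$-periodic orbit $\gm_T$ of $X_F$ with $\langle a,[\gm_T]\rangle\ge T$, which by Step~1 finishes the proof. (Concretely the Floer input may be set up on $\bigl(M\times M,\ \om\oplus(-\om)\bigr)$: non-displaceability of $Y$ off $X$ becomes non-displaceability of a product Lagrangian from the graph of $\id$; (P2) deforms that graph to the graph of a flux-$a$ symplectomorphism; intersections with the graph of the time-$T$ flow are the sought orbits; and the identity $\langle a,[\gm]\rangle=\int_\Sigma\om$, with $\Sigma$ the cylinder sweeping $\gm$ to $\phi_1(\gm)$, is what converts the action window of width $T$ into the winding bound.)

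\emph{Where the difficulty lies.} Step~1 is routine. Essentially all the content is in Step~2, and the two hard points there are: (i) making the flux-twisted, noncontractible-class Floer theory well defined with compact moduli on the possibly noncompact $M$ (using the geometry-at-infinity hypotheses of \cite{P}, with extra care owing to the twist); and (ii) --- the genuinely delicate step --- proving that (P1) forces the relevant continuation map (or action selector) to be nonzero, with the sharp action bound that produces the constant $1$; intuitively, the negation of this non-vanishing is exactly an obstruction that can be converted into a Hamiltonian displacement of $Y$ from $X$. Lastly, in the non-generic case one obtains only approximate closed orbits and must argue directly with the orbit-average measures and a compactness argument --- again, this is why the theorem outputs an invariant measure rather than a periodic orbit.
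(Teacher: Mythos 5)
This statement is not proved in the paper you are reading: it is Theorem~1.1 of Polterovich's paper \cite{P}, quoted verbatim as background. The paper under review takes it as given and only answers Polterovich's follow-up question (whether the measure is supported on a periodic orbit) under extra cone hypotheses. So there is no ``paper's own proof'' here to compare against; I can only evaluate your sketch against what \cite{P} actually does.

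Your Step~1 reduction is fine as far as it goes, but Step~2 --- the entire substance --- is built around producing $T$-periodic orbits of $X_F$ via Floer theory, and this cannot work at the level of generality of Theorem~\ref{ThmPolt}. The paper itself recalls Zehnder's counterexample: on $(\T^4,\, dp_1\wedge dq_1+\gamma\, dp_2\wedge dq_1+dp_2\wedge dq_2)$ with $\gamma$ irrational, $F(p,q)=\sin(2\pi p_1)$ separates two Lagrangian tori (so (P1)--(P2) hold) and yet has \emph{no non-constant periodic orbits at all}. Any proof of Theorem~\ref{ThmPolt} that relies on a non-vanishing Floer continuation map to output a genuine closed orbit, even for one value of $T$, is therefore doomed; the final sentence of your proposal (``in the non-generic case one obtains only approximate closed orbits'') is where the whole theorem actually lives, and you give no mechanism for producing those approximate orbits from the twisted Floer setup. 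Polterovich's route is genuinely different: he works with partial symplectic quasi-states/spectral invariants (and \cite{V} works with spectral invariants of the graph selector), establishes a shift-type inequality that encodes (P1)--(P2) and the flux $a$, and then extracts the invariant measure directly by a variational/Krylov--Bogoliubov argument --- the closed orbit never appears, which is precisely why the question the present paper addresses was open. If you want to salvage your outline, you would need to replace the periodic-orbit step by an estimate on asymptotic time-averages (e.g.\ via the Calabi or vanishing property of the quasi-state) and then take a weak-$*$ limit of orbit averages; as written, Step~2 proves too much and hence proves nothing.
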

A similar result is obtained in \cite{V} as Proposition 5.10 using a different approach.

It is natural to ask whether these invariant measures are supported on periodic orbits. In the same paper \cite{P}, the author asks the following question:

{\it Can one, under the assumptions of Theorem \ref{ThmPolt}, deduce existence of a closed orbit of the Hamiltonian flow so that the corresponding rotation vector satisfies inequality \eqref{EqPolt}? }
 
Finding periodic orbits is an important theme in symplectic dynamics. As remarked in \cite{G}, ``there is a general principle in symplectic dynamics that a compactly supported
function with sufficiently large variation must have fast non-trivial periodic orbits
or even one-periodic orbits if the function is constant near its maximum" (think of the Hofer--Zehnder capacity, for instance). However, this principle is not correct in full generality. There is a famous counter-example of Zehnder (cf. Example 2.1 of \cite{P}). On the manifold $(\T^4=\R^4/\Z^4,\omega= dp_1 \wedge dq_1 + \gamma dp_2 \wedge dq_1 + dp_2 \wedge dq_2)$ with $\gamma$ is irrational, the Hamiltonian $F(p, q) = \sin(2\pi p_1)$ carries no non-constant periodic orbits. Notice that $F$ separates the two Lagrangian tori $X=\{p=0\},\ X'=\left\{p=\left(1/2,0\right)\right\}$. 

So we specialize to the case \[M=T^*\T^n,\quad \omega_0=\sum_{i=1}^n dq_i\wedge dp_i\] and $X$ being the zero section $\{p=0\}$, $X'$ another Lagrangian torus corresponding to $\{p=p^*\neq 0\}$, and ask for the existence of noncontractible periodic orbits. However, there is an immediate counterexample given by the Hamiltonian \begin{equation}\label{EqCounter}
F(p,q)=\dfrac{\langle \al,p\rangle}{\langle\al,p^*\rangle}
\end{equation} where $\al\in \R^n$ is completely irrational and $\langle\al,p^*\rangle\neq 0$. Any composition $\sigma\circ F$ with $\sigma:\R\to\R$ smooth has no nontrivial periodic orbits. We choose $\sigma(x)=0$ for $x\leq 0$,\ $\sigma=1$ for $x\geq 1$ and monotone, then multiply $\sigma\circ F$ by a compactly supported function $\eta(p):\ \R^n\to \R$ that is 1 on a large open ball containing $p^*$. If $\eta$ decays sufficiently slowly outside the big ball, then the Hamiltonian system $\eta(p)\cdot \sigma\circ F(p,q)$ has no noncontractible 1-periodic orbits. 

In this paper, we show the existence of noncontractible periodic orbits of Hamiltonian systems in the following setting. Consider the symplectic manifold $(T^*\T^n, \omega_0)$. Consider two Lagrangian tori in $T^*\T^n$: $X$ being the zero section $\{p=0\}$ and $X'$ the section $\{p=p^*\}$, where $p^*=(p_1^*,\ldots, p_n^*)\in \R^n\setminus\{0\}$ is a constant vector. This construction fits into Polterovich's question as follows: Set $Y=X'$. Then $X$ is symplectically isotopic to $Y$ by the isotopy $\phi_t(q,p)=(q,tp^*),$ and Flux$(\{\phi_1\})=[p^*\,dq]$, while $Y=X$ is not displaceable from itself by a Hamiltonian isotopy by Gromov's theorem. 

Next given a matrix $A\in \mathrm{GL}(n,\R)$ whose columns are $v_1,v_2,\ldots,v_n$, consider the cone $\mathcal{C}$ positively spanned by $v_1,\ldots, v_n\in \R^n$, 
\[\mathcal{C}=\mathrm{span}_+\{v_1,v_2,\ldots,v_n\}:=\left\{\sum_{i=1}^n c_i v_i\ |\ c_i> 0,\quad \forall\ i=1,2,\ldots,n\right\}.\]
Denote its ``dual cone" $\mathcal C^*$ by 
\[\mathcal C^*:= \mathrm{span}_+\left\{\left(A^{T}\right)^{-1}e_1,\ \left(A^{T}\right)^{-1}e_2,\ldots,\ \left(A^{T}\right)^{-1}e_n \right\}\]
where $e_i,\ i=1,\ldots, n,$ are the standard basis vectors of $\R^n$. Both $\cC$ and $\cC^*$ are open. 

Our first result is the following. 
\begin{Thm}\label{ThmCorner}
Consider a cone $\mathcal C$ as above and its dual cone $\mathcal C^*$. Then for any point $p^*$ lying in the interior of the cone $\mathcal{C}$ the following hold: For every $\sC^\infty$ Hamiltonian $H(p,q,t)$ with compact support in $\cC\times \T^n\times \T$, and $H(p^*,q,t)\geq c>0$ for all $(q,t)\in\T^n\times\T$, and for every homology class $\al\in H_1(\T^n,\Z)\setminus \{0\}$ satisfying 
\begin{equation}\label{Eqal}\langle p^*,\al\rangle\leq c\quad \mathrm{and}\quad \al\in\mathcal C^*,\end{equation}
there exists a $1$-periodic orbit of $H$ in the homology class $\al$. 
\end{Thm}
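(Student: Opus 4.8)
The natural framework here is Floer homology for noncontractible periodic orbits on $T^*\T^n$, following the ideas of Gatien--Lalonde, Biran--Polterovich--Salamon, and Weber. The plan is to fix the free homotopy class corresponding to $\al \in H_1(\T^n,\Z)$ and build a Floer-theoretic invariant that counts $1$-periodic orbits of $H$ in that class. Since $H$ is compactly supported in $\cC\times\T^n\times\T$ and $\cC$ is an open cone, the relevant loop space component is nontrivial only because of the twisting by $\al$; the key geometric input is that the compact support keeps everything confined, so a maximum-principle or isoperimetric-type argument gives the needed $C^0$-estimates on Floer trajectories. Concretely, I would set up filtered Floer homology $\FH_*^{[a,b)}(H;\al)$ for the class $\al$, where the filtration is by the symplectic action functional $\mathcal A_H(\gamma) = -\int \gamma^*\lambda + \int_0^1 H(\gamma(t),t)\,dt$ evaluated on loops $\gamma$ in the class $\al$ (the primitive $\lambda = p\,dq$ is globally defined on $T^*\T^n$, so the action is a genuine function, not just a circle-valued one).

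The core of the argument is a sandwiching/continuation estimate. For the zero Hamiltonian (or a small $C^2$-small one), the $1$-periodic orbits in class $\al$ are exactly the constant-$p$ loops $\gamma_p(t) = (q_0 + t\al, p)$ with $p$ forced by the linearization to lie in a hyperplane; their actions are $\mathcal A_0(\gamma_p) = \langle p,\al\rangle$, and I would use these to compute the "background" Floer homology of the twisted loop space, which by the work on noncontractible orbits is the homology of the appropriate covering of $\T^n$ and is nonzero. Then, using the hypotheses $H(p^*,q,t)\ge c$ and $\langle p^*,\al\rangle \le c$ together with $\al\in\cC^*$, I would produce two sub-level sets of the action at values $a < b$ such that the inclusion-induced map on filtered Floer homology is forced to be nonzero — essentially because a monotone homotopy from $0$ to $H$ (supported in the cone) shifts action by a controlled amount, and the cone/dual-cone condition $\al\in\cC^*$ is exactly what guarantees the action window $[a,b)$ actually captures orbits near $p=p^*$ rather than orbits that have escaped to the boundary of $\cC$. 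The pairing $\langle p^*,\al\rangle$ appearing in \eqref{Eqal} is precisely the action of the "test loop" sitting over $p^*$, and the inequality $\langle p^*,\al\rangle \le c$ ensures this test loop's action is separated from the part of the spectrum where $H$ vanishes. A nonzero map on filtered Floer homology between distinct action levels forces a generator — hence a $1$-periodic orbit — with action in the intervening window, and a non-degeneracy/limiting argument (perturb $H$, extract orbits, pass to the limit) upgrades this to an orbit of the given (possibly degenerate) $H$ in class $\al$.

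The main obstacle, I expect, is the $C^0$-confinement of Floer trajectories: because the Hamiltonian lives on the noncompact manifold $T^*\T^n$ and is only compactly supported (rather than, say, convex/fiberwise-quadratic at infinity), one must rule out Floer cylinders escaping to $p\to\infty$ or, more subtly, wandering outside the cone $\cC$ into regions where $H\equiv 0$ but where spurious low-action orbits could live. This is where the geometry of the cone $\cC$ and its dual $\cC^*$ must be used quantitatively — I anticipate choosing an almost-complex structure adapted to a Liouville-type end and running a maximum principle on the function $p$ (or a linear functional dual to $\al$) along Floer cylinders, with the cone condition ensuring the relevant linear functional is monotone along trajectories exactly in the region that matters. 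A secondary technical point is the dependence on the choice of capping/reference loop in the twisted class, which affects the action only by an overall constant and so does not interfere with the sandwich inequality, but must be bookkept carefully. Once confinement is established, the rest is a standard (if lengthy) Floer continuation argument.
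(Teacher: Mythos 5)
Your proposal correctly identifies the framework — filtered Floer homology for a fixed free homotopy class, action filtration by $\mathcal A_H$, and a BPS-style monotone-continuation argument — and this is indeed the skeleton of the paper's proof. However, the sketch leaves out the key technical ingredient that makes the argument work and contains a couple of conceptual slips.

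The central missing piece is the construction of an explicit family of \emph{profile functions} $H_s(p)$, $s\in\R$, depending only on $p$, compactly supported in the cone, monotone in $s$, and simultaneously upward and downward exhausting in the relevant Hamiltonian class. These model Hamiltonians are what allow one to actually compute $\FH^{[a,\infty)}(H_s;\al)$ via Morse--Bott theory (Pozniak's theorem): the 1-periodic orbits of $H_s$ in class $\al$ form Lagrangian tori over solutions of $DH_s(p)=\al$, and the paper's Lemma~\ref{LmLocation} shows that exactly one such torus $P_s^+$ has nondegenerate Hessian and action strictly above $c-\langle p^*,\al\rangle$, while every other one has action below $0$. The ``sandwich'' you gesture at,
\[
\mathcal A_{H_s}(P_s^-)<0\leq a\leq c-\langle p^*,\al\rangle<\mathcal A_{H_s}(P_s^+),
\]
follows from this explicit Hessian and action analysis, not from a general continuation or maximum-principle bound. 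In particular, the role of $\al\in\cC^*$ is that $\beta:=\mathbf A^T\al$ must lie in the open first quadrant for the equation $DF_s(y)=\beta$ to have a solution in the region where the profile function is strictly concave, which is also what forces $D^2H_s(p_s^+)$ to be negative definite so that Lemma~\ref{LmMorseBott} applies. Your reading of $\cC^*$ as a confinement condition for Floer trajectories and of $\langle p^*,\al\rangle$ as the action of a ``test loop sitting over $p^*$'' is not how the hypotheses actually enter.

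Two further points. First, the claim that ``for the zero Hamiltonian (or a small $C^2$-small one), the $1$-periodic orbits in class $\al$ are exactly the constant-$p$ loops $\gamma_p(t)=(q_0+t\al,p)$'' is false: $H=0$ has no $1$-periodic orbits in any nontrivial class, and a $C^2$-small $H$ has none either. Nontrivial orbits in class $\al$ appear only where $\partial H/\partial p=\al$, which for the profile functions happens on tori near (but not at) $p^*$. Second, the $C^0$-confinement worry you raise at length is handled much more cheaply than by an adapted almost-complex structure or a maximum principle: because $\omega_0=d\lambda$ is exact on $T^*\T^n$ and $H$ is compactly supported, the usual energy identity already gives Gromov compactness of Floer cylinders (Section~\ref{sssctCpt}); the cut-off $W_s$ in the profile functions is there only to ensure compact support, and its effect on the location of orbits is ruled out in Step~5 of Lemma~\ref{LmLocation} by a direct derivative estimate, not by a confinement argument. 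In short, you have the right toolbox and the right overall architecture (filtered Floer homology, direct and inverse limits over exhausting sequences, the two relative capacities), but the proof lives in the concrete construction and analysis of the profile functions, which your proposal does not attempt.
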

A positively spanned cone cannot contain any line, so the cone $\mathcal{C}$ in the assumption is necessary to rule out the counterexample \eqref{EqCounter}. As the angle at the tip of the cone $\mathcal{C}$ becomes more obtuse, the set of homology classes for which Theorem \ref{ThmCorner} guarantees a $1$-periodic orbit becomes smaller. See Figure 1 for the picture of the cone $\mathcal C$ and its dual cone $\mathcal C^*$ in the two dimensional case (we choose $v_1=(-1,3),\ v_2=(3,-1)$).
\begin{figure}
       \centering
  \begin{subfigure}[b]{0.45\textwidth}
               \includegraphics[width=0.8\textwidth]{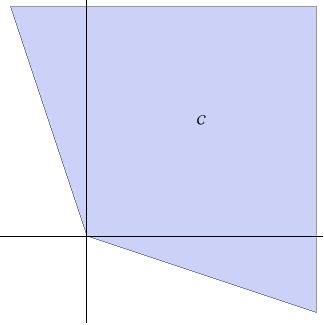}
                \caption{The cone $\mathcal C$
                 \
                 }
                
               \label{fig:profn1}
       \end{subfigure}%
             \begin{subfigure}[b]{0.45\textwidth}
                \includegraphics[width=0.8\textwidth]{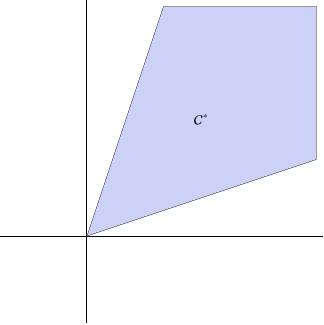}
                \caption{The dual cone $\mathcal C^*$}
                \label{fig:profn2}
       \end{subfigure}
       \caption{The cone $\mathcal C$ and the dual cone $\mathcal C^*$}\label{fig:cone}
\end{figure}

A closely related result is the following Theorem B of \cite{BPS}. To state the theorem, we first define the symplectic action as
 \begin{equation}\label{EqAction}
\mathcal{A}_H(x)=\int_0^1 (H(x(t),t)-\lambda(\dot x(t)))\,dt,\quad \mathrm{for\ } x\in \sC^\infty(\T^1,RT^*\T^n),
\end{equation}
where $\lb=p\,dq$ is the Liouville 1-form. 
 \begin{Thm}[Theorem B of \cite{BPS}]\label{ThmBofBPS}
For every Hamiltonian function $H(p,q,t)$ that is  compactly supported in $\{\|p\|<1\}\times \T^n\times\T^1$, and for every $e\in \Z^n$ such that
\[\Vert e\Vert\leq c:= \inf_{(q,t)\in\T^n\times \T^1} H(0,q,t),\]
the Hamiltonian system has a periodic solution $x$ in the homotopy class $e$ with action $\mathcal A_H (x) \geq c$.
\end{Thm}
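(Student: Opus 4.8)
The plan is to prove this by a filtered Floer‑homology argument, organised as in \cite{BPS} around the relative symplectic homology of a neighbourhood of the zero section; it also previews the proof of Theorem~\ref{ThmCorner}. The case $e=0$ is the classical Floer‑theoretic existence result for a compactly supported Hamiltonian, so assume $e\in\Z^n\setminus\{0\}$.

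First I would set up the Floer complex of $T^*\T^n$ generated by the $1$-periodic orbits of $X_H$ in the free homotopy class $e\in\Z^n=\pi_1(\T^n)$, filtered by the action $\mathcal A_H$. This is clean in the cotangent bundle: $\pi_2(T^*\T^n)=0$, so $\mathcal A_H$ is single-valued on the relevant component of the loop space and no holomorphic spheres bubble off; and $c_1=0$ with the bundle symplectically trivial, so the Conley--Zehnder index is a genuine $\Z$-grading once a reference loop in class $e$ and a trivialisation along it are fixed. The one analytic point needing care is $C^0$-confinement of Floer cylinders: I would take the almost complex structure to be of contact type with respect to $\|p\|$ outside a compact set, so that $\|p\|$ is subharmonic along Floer cylinders wherever the Hamiltonian is $p$-independent (in particular where $H$ vanishes), and the maximum principle traps every cylinder joining orbits in $\{\|p\|\le1\}$ in a fixed compact set; continuation maps for homotopies through such admissible Hamiltonians then exist and respect the filtration. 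Note that the \emph{total} homology $HF_*(H;e)$ of a compactly supported $H$ vanishes --- a $C^2$-small compactly supported Hamiltonian has no $1$-periodic orbit in a class $e\neq0$, and continuation is an isomorphism among compactly supported Hamiltonians --- so existence cannot be read off from $HF_*(H;e)$ itself, and one must compare $H$ with large models while keeping track of the action.

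The comparison object is the radial Hamiltonian $k=k(\|p\|)$ supported in $\{\|p\|<1\}$ that descends monotonically from a value $h_0$ near $p=0$ to $0$ at $\|p\|=1$, modelled on the flat geodesic flow; its $1$-periodic orbits in class $e$ form exactly the Morse--Bott $n$-torus $\{(q_0+te,\ -r_0\,e/\|e\|):q_0\in\T^n\}$, where $r_0$ is the radius at which the radial slope of $k$ equals $-\|e\|$. Because $k$ loses the whole height $h_0$ over radial distance at most $1$ and must be controlled near $\|p\|=1$, such an $r_0\in(0,1)$ exists precisely when $\|e\|\le h_0$, and a short computation using the concavity of $k$ near $p=0$ gives that each orbit on this torus has action $k(r_0)+r_0\|e\|\ge h_0$. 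After a small nondegenerate perturbation this torus carries $H_*(\mathcal L_e\T^n;\Z/2)\cong H_*(\T^n;\Z/2)\neq0$ in an action window just above $h_0$, and letting $h_0\uparrow c$ pins the relative Hofer--Zehnder-type capacity of the pair $(\{\|p\|<1\},\ \text{zero section})$ in class $e$ at $\|e\|$, placing the fundamental class $[\T^n]$ at action level $\to c$. Given then $H$ as in the theorem, I would pass (following \cite{BPS}) to the relative symplectic homology of that pair in class $e$, assembled from the filtered Floer homologies of admissible Hamiltonians supported in $\{\|p\|<1\}$: it is detected by $k$, and --- the crucial structural input --- it is nonzero, which in the end is equivalent to the non-displaceability of the zero section (Gromov). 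The quantitative, filtered form of the comparison (monotonicity of the continuation maps relating $H$ to the models, together with the standard fact that a finite spectral number is a critical value of $\mathcal A_H$) then gives, for each $h_0<c$, a $1$-periodic orbit $x_{h_0}$ of $X_H$ in class $e$ with $\mathcal A_H(x_{h_0})\ge h_0$; these orbits stay in the fixed compact set $\{\|p\|\le1\}$, so Arzel\`a--Ascoli produces a limit orbit of $X_H$ in class $e$ with $\mathcal A_H\ge c$.

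The substantive work --- and the main obstacle --- is the construction and non-vanishing of the relative homology used in the last step: assembling the Floer complexes of the auxiliary Hamiltonians into an honest symplectic invariant of the pair $(\{\|p\|<1\},\ \text{zero section})$ in class $e$, checking that the maximum-principle confinement is uniform along all the continuation homotopies (this is where both hypotheses enter --- $\mathrm{supp}(H)\subset\{\|p\|<1\}$ keeps everything inside the region, while $\|e\|\le c$ is what makes the comparison to the model available), and proving that $[\T^n]$ survives to the limit, which unwinds to the Gromov non-displaceability of the zero section. That all of this must fail for $\|e\|>c$ is concrete: one can then spread a Hamiltonian of height $c$ over a ball inside $\{\|p\|<1\}$ wide enough that its radial slope never reaches $\|e\|$, and it has no $1$-periodic orbit in class $e$ at all. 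Once the relative theory is in place, the remainder is the action bookkeeping that upgrades ``an orbit exists'' to ``an orbit with $\mathcal A_H\ge c$ exists''.
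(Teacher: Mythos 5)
Theorem~B of \cite{BPS} is stated in the paper only as a cited reference point and is not re-proved there; the paper's own proof effort goes into Theorem~\ref{ThmCorner}, for which it imports and adapts the BPS machinery. Your proposal is a correct reconstruction of the original BPS argument and follows exactly the framework the paper sets up in Section~\ref{sctFloer} and runs with cone-adapted profile functions in Sections~3--4: filtered Floer homology in the fixed class $e$, direct/inverse limits assembling the relative symplectic homology of the pair $(\{\|p\|<1\},\{p=0\})$, a radial Morse--Bott model $k$ with the tangent-line estimate $\mathcal A=k(r_0)-r_0k'(r_0)\ge k(0)=h_0$ at the $P^+$ torus coming from concavity, and a BPS-type capacity that transfers the bound to an arbitrary admissible $H$ by monotone continuation. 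Two points that you only allude to and would need to be made explicit: the second solution radius $r_0'$ of $k'(r)=-\|e\|$, lying in the convex part of $k$, must be shown to carry action strictly below the filtration level $a$ (the analogue of \eqref{EqSqueezea} and Step~4 of Lemma~\ref{LmLocation}), so that Pozniak's theorem applied to $[a,\infty)$ sees only the $P^+$ torus, and the Hessian of $k$ there must be nondegenerate in the sense of Lemma~\ref{LmMorseBott}; and the passage from action $\ge h_0$ for each $h_0<c$ to action $\ge c$ needs the limiting step you supply with Arzel\`a--Ascoli, using that all class-$e$ one-periodic orbits lie in the compact set $\{\|p\|\le1\}$ because $X_H$ vanishes outside it. With those filled in, the proposal is sound and matches the machinery of the paper.
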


We also get the following dense existence result.
\begin{Thm}\label{ThmDense}
Consider a $\sC^\infty$ autonomous Hamiltonian $H(p,q): T^*\T^n\to\R$ that is compactly supported in $\cC\times\T^n$, and satisfies $H(p^*,q)>0$ for some $p^*\in\cC$ and all $q\in \T^n$. Then for each nontrivial homology class $\al\in \cC^*\cap  H_1(\T^n,\Z)$,
 there exists a dense subset $S_\al\subset(0,\min_{q} H(p^*,q))$ with the property that for each $s\in S_\al$, the level set $\{H=s\}$ contains a closed orbit $($not necessarily of period 1$)$ in the class $\al$.
\end{Thm}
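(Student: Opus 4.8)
The plan is to deduce the dense existence statement from Theorem \ref{ThmCorner} by a now-standard "squeezing/rescaling" argument in the spirit of Hofer--Zehnder and Struwe. Fix a nontrivial class $\al\in\cC^*\cap H_1(\T^n,\Z)$ and set $c_0:=\min_q H(p^*,q)>0$. For a regular value $s\in(0,c_0)$ of $H$, I would compare the Hamiltonian flow of $H$ on the level set $\{H=s\}$ with that of a modified Hamiltonian $H_\delta=f_\delta\circ H$, where $f_\delta:\R\to\R$ is a smooth monotone reparametrization that equals $0$ below $s-\delta$, equals some large constant $c_\delta>\langle p^*,\al\rangle$ on a neighborhood of $[s,c_0]$, and is compactly supported; since $H$ is supported in $\cC\times\T^n$, so is $H_\delta$, and $H_\delta(p^*,q)=c_\delta$ for all $q$ because $H(p^*,q)\ge c_0>s$. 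By \eqref{Eqal} (using $\langle p^*,\al\rangle\le c_\delta$ and $\al\in\cC^*$), Theorem \ref{ThmCorner} yields a $1$-periodic orbit $x_\delta$ of $H_\delta$ in the class $\al$. Because $\al\ne 0$, this orbit is nonconstant, hence is not contained in the region where $f_\delta'\equiv 0$; on the region where $f_\delta'>0$, the flow of $H_\delta$ is a time-reparametrization of the flow of $H$, so $x_\delta$ lies on a single level set $\{H=s_\delta\}$ with $s_\delta\in\mathrm{supp}(f_\delta')$, and after reparametrizing it gives a closed orbit of $H$ (of some period, not necessarily $1$) on that level in the class $\al$.

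The second step is to extract density. Choosing a sequence of reparametrizations $f_\delta$ whose derivative is supported in shrinking intervals $I_\delta\subset(s-\delta,s+\delta)$ that all meet a fixed subinterval $J\subset(0,c_0)$, one obtains for every such $J$ a closed orbit of $H$ in class $\al$ on some level in $J$. Letting $J$ range over a countable basis of $(0,c_0)$ and collecting the resulting energy values produces the dense set $S_\al$. The point is that the obstruction $\langle p^*,\al\rangle\le c_\delta$ is automatically satisfiable no matter how small $J$ is, since we are free to make $c_\delta$ as large as we like while keeping $\mathrm{supp}(f_\delta')$ inside $J$; the only constraint inherited from Theorem \ref{ThmCorner} is the cone condition $\al\in\cC^*$, which we have assumed.

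I would organize the argument so that the value $s_\delta$ at which the orbit lives is pinned down by the support of $f_\delta'$: concretely, take $f_\delta$ to be strictly increasing exactly on $J$ and flat elsewhere, so that necessarily $s_\delta\in\overline{J}$. The main technical point — and the only place real care is needed — is verifying that the $1$-periodic orbit $x_\delta$ produced by Theorem \ref{ThmCorner} genuinely crosses the region where $f_\delta$ is strictly increasing, i.e. that it cannot be a constant orbit sitting in a flat region; this is exactly where nontriviality of $\al$ is used, since a constant loop represents $0\in H_1(\T^n,\Z)$. A secondary point is that a priori $s$ need not be a regular value of $H$, but the set of regular values is of full measure by Sard's theorem, and restricting $J$ to regular values does not affect density; alternatively one simply notes that the argument never needs regularity, only that the reparametrized loop is a bona fide periodic orbit of $H$. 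The rest is bookkeeping: checking compact support of $H_\delta$ in $\cC\times\T^n\times\T$ and the inequality in \eqref{Eqal}, both of which are immediate from the construction.
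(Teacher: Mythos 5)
Your proposal is correct and follows essentially the same route as the paper's proof: compose $H$ with a monotone cutoff supported on a short energy interval to manufacture a Hamiltonian meeting the hypotheses of Theorem \ref{ThmCorner}, observe that the resulting nonconstant $1$-periodic orbit must sit on a level where the cutoff's derivative is positive (hence a level of $H$ inside that interval), and shrink the interval over a countable basis to obtain density. The paper states this for an arbitrary subinterval $(a,b)\subset(0,\min_q H(p^*,q))$ at once rather than squeezing around a chosen $s$, but the underlying argument is identical.
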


As an application of Theorem \ref{ThmCorner}, we answer a question of Arnold in the following Theorem \ref{ThmArnold}. For the problem, see Section 1.8 of \cite{A}, where Arnold asked for the existence of periodic orbits of the non-convex system $H=\frac{p_1^2}{2}-\frac{p_2^2}{2}+V(q_1,q_2),\quad (p,q)\in T^*\T^2,$ in each homology class. This system appears naturally when one wants to prove Arnold diffusion for non-convex type Hamiltonian systems and finding periodic orbits is the first thing one needs to do. Arnold remarked that ``It seems that the contemporary technique of the calculus of variation in the large has no ready methods for this problem", which seems still the case today. The next result shows the strength of our theorem when applied to nonconvex Hamiltonian systems. 
\begin{Thm}\label{ThmArnold}
Consider a Hamiltonian system of the form 
\begin{equation}\label{EqArnold}
H(p,q)=\frac{p_1^2}{2}-\frac{p_2^2}{2}+V(q_1,q_2),\quad (p,q)\in T^*\T^2,\quad V(q)\in \sC^\infty(\T^2,\R).
\end{equation} We normalize the potential $V$ by adding a constant such that $\max_q V(q)=0$. 
For each homology class $\al=(\al_1,\al_2)\in H_1(\T^2,\Z)\setminus\{0\}$ with $\al_1\neq \pm\al_2$, there exists a dense subset $S_\al$ of $(0,\infty)$ such that for each $s\in S_\al$, there exists a periodic orbit lying on the energy level $\{H=s\}$ with homology class $\al$. 
\end{Thm}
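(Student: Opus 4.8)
The plan is to reduce the non-convex Hamiltonian \eqref{EqArnold} to a situation governed by Theorem \ref{ThmDense}, by means of a linear symplectic change of coordinates that diagonalizes the quadratic form $\frac{p_1^2}{2}-\frac{p_2^2}{2}$ into coordinates adapted to a positively spanned cone. First I would introduce the linear change $u=p_1+p_2$, $v=p_1-p_2$ (with the conjugate change on the $q$ variables so that the transformation is symplectic on $T^*\T^2$, up to rescaling), so that the kinetic part becomes $\frac{1}{2}uv$. The condition $\al_1\neq \pm\al_2$ is exactly the condition that the homology class $\al$, expressed in the new basis, has both components nonzero and of the same sign (or can be arranged so after a sign flip $u\mapsto -u$, $v\mapsto-v$); this is what will let $\al$ land in the interior of an appropriate dual cone $\cC^*$.

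Next I would fix an energy level $s>0$ and localize. Since $\max_q V=0$, on the level set $\{H=s\}$ we have $\frac{1}{2}uv = s-V(q)\geq s>0$, so the whole energy surface lies in the region $\{uv>0\}$, which is the union of two opposite open quadrant-like cones in the $(u,v)$-plane. Choosing the branch $\{u>0,\,v>0\}$ (the other is handled by the sign symmetry), this region is a positively spanned cone $\cC$ in the sense of the paper, e.g. spanned by $v_1,v_2$ close to the $u$- and $v$-axes; its dual cone $\cC^*$ then also sits in the open first quadrant, and $\al$ (after the sign normalization) lies in $\cC^*\cap H_1(\T^2,\Z)$. The remaining issue is compact support: the Hamiltonian $H$ in \eqref{EqArnold} is not compactly supported in $\cC\times\T^2$. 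To fix this I would multiply $H$ by a cutoff $\eta(u,v)$ that equals $1$ on a large compact piece of $\cC$ containing the relevant portion of the energy surface and is supported in $\cC$; on the level set $\{H=s\}$ the modified Hamiltonian $\eta H$ agrees with $H$, so its flow on that level set is a time-reparametrization of the original flow and periodic orbits correspond. One must check that $\eta H$ still satisfies the positivity hypothesis of Theorem \ref{ThmDense}, i.e. that there is a point $p^\dagger\in\cC$ with $(\eta H)(p^\dagger,q)>0$ for all $q$; since $H$ is unbounded above along rays into $\cC$ and $V$ is bounded, one can pick $p^\dagger$ deep inside the cone where $\eta\equiv 1$ and $\frac{1}{2}uv$ dominates, with $s<\min_q(\eta H)(p^\dagger,q)$.

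Applying Theorem \ref{ThmDense} to $\eta H$ with this cone and the class $\al$ then produces a dense set $S_\al\subset(0,\min_q(\eta H)(p^\dagger,q))$ of energies $s$ for which $\{\eta H=s\}$ carries a closed orbit in class $\al$; transporting back through the cutoff and the linear symplectic change gives the desired periodic orbit of \eqref{EqArnold} on $\{H=s\}$. To get density in all of $(0,\infty)$ rather than in a bounded window, I would let the cutoff $\eta$ and the point $p^\dagger$ vary: enlarging the support of $\eta$ pushes $\min_q(\eta H)(p^\dagger,q)$ to infinity, and a diagonal/exhaustion argument over an increasing family of cutoffs yields a dense subset of $(0,\infty)$. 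The main obstacle I anticipate is precisely this passage from compactly supported Hamiltonians to the globally defined, non-compactly-supported, unbounded $H$: one has to argue carefully that the periodic orbits produced for $\eta H$ genuinely lie in the region where $\eta\equiv 1$ (so that they are orbits of $H$), which requires controlling the action or the energy of the orbit furnished by Theorem \ref{ThmDense} — using that its homology class is fixed to be $\al$ and the geometry of the cone confines orbits on a given energy level to a compact set, one should be able to choose the cutoff large enough in advance for each target energy $s$.
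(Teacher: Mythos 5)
Your overall strategy — compose with a cutoff (or with $\sigma$ as in the proof of Theorem \ref{ThmDense}) to reduce to the compactly supported setting of Theorem \ref{ThmCorner}, apply the theorem, then argue that the periodic orbit produced is not an artifact of the cutoff — is the same as the paper's. But there are two issues, one minor and one fatal to the argument as written.

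The minor issue: the linear change $u=p_1+p_2$, $v=p_1-p_2$ together with its conjugate change on $q$ is \emph{not} a symplectomorphism of $T^*\T^2$: the conjugate map on the base would be $q\mapsto (B^T)^{-1}q$ with $B=\bigl(\begin{smallmatrix}1&1\\1&-1\end{smallmatrix}\bigr)$, and $(B^T)^{-1}=\tfrac12 B$ does not preserve the lattice $\Z^2$, so it does not descend to $\T^2$. The paper sidesteps this entirely: the matrix $\mathbf A$ enters only as a linear change of the auxiliary fiber variable $y=\mathbf A^{-1}p$ inside the constructions of Section~3, never as a change of coordinates on $T^*\T^2$. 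You should do the same — fix the cone $\cC$ spanned by $(1,1),(1,-1)$ in the $p$-plane and use the machinery of Theorem~\ref{ThmCorner} directly, without attempting a symplectic coordinate change on the torus.

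The fatal gap is your final step. You write that ``the geometry of the cone confines orbits on a given energy level to a compact set,'' but that is precisely what fails here and is the whole point of the non-convex problem: the level set $\{\tfrac{p_1^2}{2}-\tfrac{p_2^2}{2}+V=s\}$ is a \emph{non-compact} hyperboloid-like surface. The cutoff $W_1$ (or $\eta$) is therefore genuinely active on $\{H=s\}$, and one must argue that the 1-periodic orbits furnished by Theorem~\ref{ThmCorner} cannot live in the cutoff region. The paper's proof does this via a concrete dynamical estimate using the explicit Hamiltonian equations: it splits into the region $\tfrac{p_1^2}{2}-\tfrac{p_2^2}{2}>b+M$, where $\sigma'=0$ forces $\dot p=0$ and $\|\dot q\|$ small so the rotation vector cannot equal $\al$; and the region $\tfrac{p_1^2}{2}-\tfrac{p_2^2}{2}\le b+M$ with $\|p\|/R\simeq 1$, where the orbit hugs one asymptote $p_1\pm p_2\approx 0$, forcing $\dot q_1\mp\dot q_2\approx 0$, again incompatible with $\al_1\neq\pm\al_2$ (this is exactly where the hypothesis on $\al$ is used). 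Without this estimate (or a replacement for it), the argument does not close: you cannot conclude that the periodic orbit of the cut-off Hamiltonian is a periodic orbit of $H$. Your ``diagonal over an increasing family of cutoffs'' idea does not repair this, because for a \emph{fixed} energy $s$ the orbit could a priori drift to infinity along the hyperbola as the cutoff is enlarged.
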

The idea is to notice that the function $p_1^2/2-p_2^2/2$ is positive in the interior of the cone spanned positively by $(1,1), (1,-1)$, and is zero on the boundary. After proper scaling and translation of the Hamiltonian to handle the bounded perturbation $V$, then composing it with $\sigma$ that we used in the paragraph of \eqref{EqCounter}, we get a modified Hamiltonian to which Theorem \ref{ThmCorner} is applicable. We obtain a periodic orbit lying on the energy level of the modified Hamiltonian, which is also a periodic orbit of the Hamiltonian \eqref{EqArnold}. See Section \ref{ssct:ThmArnold} for more details.

It is interesting to notice that the inequalities \eqref{EqPolt} in \cite{P,V} go in the opposite direction as ours \eqref{Eqal} (In our case, if we rescale the energy oscillation from $c$ to $1$, the corresponding time rescaling will take the rotation vector $\al$ to $\al/c$). The invariant measure $\mu$ found in \cite{V} verifies the equality (see Proposition 5.10 and Corollary 5.8 of \cite{V}) \begin{equation}\label{EqMather}
\mathcal{A}(\mu)=\boldsymbol{\al}(p^*)-\langle p^*,\rho(\mu)\rangle,
\end{equation} where $\boldsymbol\al(p^*)$ is Mather's $\boldsymbol\al$ function, which can be considered as energy $c$, and $\mathcal{A}$ is the symplectic action (see \cite{V} and our definition \eqref{EqAction}). On the other hand, in our proof, we always guarantee that our periodic orbits satisfy the inequality $\mathcal{A}\leq c-\langle p^*,\al\rangle$ (see Section \ref{ssctProof}). In Mather theory for positive definite Lagrangian systems \cite{M}, Equation \eqref{EqMather} implies that $\mu$ is action minimizing. So we may think that the invariant measures found by \cite{P,V} resemble the action minimizing measure of Mather. However, in our case, strict inequality may happen. Notice that our action has a negative sign compared to Mather's action. This shows that our periodic orbits may not be action minimizing in Mather's setting.

Let us now review the literature briefly. The existence of certain periodic orbits in Hamiltonian systems is part of the story of the Weinstein conjecture. We refer to \cite{G}
for a review. We focus on results mostly relevant to ours. In \cite{HV}, the authors prove the existence of periodic orbits for Hamiltonians separating neighbourhoods of two points on $\C \mathbb P^n$ using J-holomorphic curve techniques. 
Using the method of \cite{HV}, Gatien and Lalonde \cite{GL} showed the existence of noncontractible periodic orbits for compactly supported Hamiltonians separating two Lagrangian tori on $T^*K$ where $K$ is the Klein bottle as well as the case when $\Vert p^*\Vert$ is sufficiently small for $T^*\T^n$. In \cite{L}, Y.-J. Lee generalized the result of \cite{GL} by introducing a Gromov-Witten type invariant. Notice $T^*\T^n$ is exactly a case when the invariant of \cite{L} vanishes, so that we have counterexample \eqref{EqCounter} and the Gromov-Witten invariant approach does not work in out setting. On the other hand, there is a Floer theoretical approach developed in \cite{BPS}, the authors obtain several results including Theorem \ref{ThmBofBPS}. Their results are further generalized by \cite{W, SW} to general manifolds $T^*M$ where $M$ is closed. Other related results on the existence of non contractible periodic orbits are
obtained in [B, BH, G13, G14, GG2, N] etc. There is another important related
topic called the Conley conjecture, which asks for infinitely many periodic orbits. See [GG1] for an extensive review.

The proof of our results is to implement the machinery of \cite{BPS}.  We will show in the following sections that the method of \cite{BPS} goes through. 

The paper is organized as follows. In Section \ref{sctFloer}, we set up the machinery of Floer homology. This part follows mainly from \cite{BPS} with some variations following \cite{W}. Our new contribution is Lemma \ref{LmMorseBott}. We define the filtered Floer homology group in Section \ref{ssctFloer} and the inverse and direct limits of the groups in Section \ref{ssctLimits} induced by the monotone homotopies of Hamiltonians. We introduce exhausting sequences in Section \ref{ssctExhaust}, which reduce the computation of the Floer homology of any Hamiltonian to that for an exhausting sequence. In Section \ref{ssctCapacity}, we introduce a BPS type capacity which is suitable to find periodic orbits and another homological relative capacity which is accessible to computation and bounds the other capacity. Next, in Section \ref{ssctMorseBott}, we introduce the Morse-Bott theory which will be used to compute the Floer homology group of the exhausting sequence. In Section \ref{sctProof}, we prove Theorem \ref{ThmCorner}. In this section, we construct a family of profile functions giving an exhausting sequence and study their first and second order derivatives carefully. We use Morse-Bott theory to compute the Floer homology group for the profile functions. Finally in Section \ref{sctProofs}, we prove Theorems \ref{ThmCorner}, \ref{ThmDense} and \ref{ThmArnold}. 
\section{Floer homology}\label{sctFloer}
In this section, we set up the framework of \cite{BPS}. Since we specialize to the manifold $T^*\T^n$, we get some simplification in the presentation. 
\subsection{Floer theory and spectral invariants}\label{ssctFloer}
\subsubsection{Symplectic actions}
We consider the standard symplectic form $\omega_0=\sum_{i=1}^ndp_i\wedge dq_i$ and the Liouville 1-form $\lambda=\sum_{i=1}^np_idq_i$ such that $d\lambda=\omega_0.$ We choose some large $R$ (where $\Vert\cdot\Vert$ is the Euclidean norm) and denote by $RT^*\T^n$ the open set \[RT^*\T^n:=\left\{(p,q)\in T^*\T^n\ |\ \Vert p\Vert< R\right\}.\] 
In this section, we define Floer homology for functions in $\mathscr C^\infty_{cpt}(RT^*\T^n\times\T^1,\R)$, where $cpt$ means {\it compactly supported}.
We denote by $\mathscr L\T^n:=\mathscr C^\infty(\T^1,RT^*\T^n)$ the space of free loops of $RT^*\T^n$. To each $x\in \mathscr L\T^n$, we associate the free homotopy class $[x]\in \pi_1(\T^n)=H_1(\T^n,\Z)$. Given a homotopy class $\al\in \pi_1(\T^n)$ we denote its component by
\[\mathscr L_\al \T^n:=\{x\in \mathscr L\T^n\ |\ [x]=\al\}.\]
Each $H(p,q,t)\in \mathscr C^\infty_{cpt}(RT^*\T^n\times\T^1,\R)$ determines a vector field $X_H$ through 
\[i(X_H)\omega_0=-dH.\]
The space of 1-periodic solutions representing the class $\al$ is denoted by 
\[\mathscr{P}(H,\al):=\left\{x\in \mathscr L_\al \T^n\ |\ \dot x=X_H(x(t))\right\}.\]
Elements of $\mathscr{P}(H,\al)$ are the critical points of the symplectic action $\mathcal A_H(x)$ \eqref{EqAction} for $x\in \mathscr L_\al \T^n$.
\subsubsection{Action spectrum and periodic orbits}
The action spectrum is defined as
\[\mathcal{S}(H;\al)=\mathcal{A}_H(\mathscr{P}(H;\al))=\{\mathcal{A}_H(x)\ |\ x\in \mathscr P(H;\al)\}.\]
Consider $-\infty\leq a<b\leq \infty$ and denote by $\mathscr{P}^{[a,b)}(H;\al)$ the set of 1-periodic solutions of $H$ in the homotopy class $\al$ and with action in the interval $[a,b)$:
\[\mathscr{P}^{[a,b)}(H;\al):=\mathscr{P}^{b}(H;\al)\setminus \mathscr{P}^{a}(H;\al),\quad \mathscr{P}^a(H;\al):=\{x\in \mathscr{P}(H;\al)\ |\ \mathcal{A}_H(x)<a\}.\]
We assume that the Hamiltonian $H\in \mathscr C^\infty_{cpt}(RT^*\T^n\times\T^1,\R)$ satisfies the following nondegeneracy condition:

  \ \ \ \ {($\star$)\ \it $a,b\notin \cS(H;\al)$ and every $x\in \sP(H;\al)$ is nondegenerate in the sense that the derivative $d\phi_H^1(x(0))$ of the time-1 map $\phi^1_H$ does not have 1 in its spectrum. }

This nondegeneracy condition can be satisfied by perturbing $H$ near each periodic orbit (see Section 2.1 of \cite{W}). 
\subsubsection{Floer homology group}
We next define the Floer homology group $\FH^{[a,b)}$ with $\Z_2$ coefficients as the homology of the chain complex $\FC^{[a,b)}(H;\al)$ over $\Z_2$ which is generated by orbits in $\sP^{[a,b)}(H;\al)$
\[\FC^{[a,b)}(H;\al):=\FC^{b}(H;\al)/\FC^{a}(H;\al),\quad \FC^{a}(H;\al):=\bigoplus_{x\in \sP^a(H;\al)}\Z_2\, x.\]
\subsubsection{The boundary operator, energy}
To define the boundary operator, we consider the perturbed Cauchy-Riemann equation
\begin{equation}\label{EqCR}
\partial_s u+J_0(\partial_t u-X_H(u))=0,
\end{equation}
where $J_0=\left[\begin{array}{cc}
0&-\id_n\\
\id_n&0
\end{array}\right]$ is fixed in this paper.
 
We define the energy associated to a smooth solution $u(s,t):\ \R\times \T^1\to T^*\T^n$ of \eqref{EqCR} as
\[\mathcal{E}(u):=\int_{\T^1}\int_\R\Vert\partial_su\Vert^2\,ds\,dt.\]
If $u$ is a finite energy solution of \eqref{EqCR}, then we have
\begin{equation}\label{EqLimit}
\lim_{s\to\pm\infty} u(s,t)=x^\pm(t),\quad \lim_{s\to\pm\infty} \partial_su(s,t)=0
\end{equation}
and the convergences are uniform in $t$. Moreover, we have $x^\pm\in \sP(H;\al)$ and the energy identity
\begin{equation}\label{EqEnergy}\mathcal E(u)=\cA_H(x^-)-\cA_H(x^+).\end{equation}
\subsubsection{Compactness}\label{sssctCpt}
The energy identity \eqref{EqEnergy} and the exactness of $\omega_0$ imply that the space of finite energy solutions of \eqref{EqCR}, after quotient by the $\R$ action and compactification, is compact with respect to $\sC^\infty$ topology on compact sets. 
\subsubsection{Nondegeneracy}
 In order to guarantee the linearized operator of \eqref{EqCR} to be surjective, we do not perturb the almost complex structure $J_0$, instead, we perturb the Hamiltonian $H$ in an arbitrarily small neighborhood $U$ of the image of $u$ in $(R-\eps) T^*\T^n\times \T^1$ where $\eps$ is chosen to be so small that $H\in \sC^\infty_{cpt}((R-\eps)T^*\T^n\times\T^1,\R)$.
    We can choose the perturbation to vanish up to second order along the orbits $x^\pm$ (see Section 2.1 of \cite{W} and Theorem 5.1 (ii) of \cite{FHS}). Namely, there exists a small neighborhood $\mathscr{U}$ of zero in $\sC_{cpt}^\infty(U,\R)$ and a Baire's second category subset $\mathscr{U}_{reg}(\subset \mathscr U)$ of regular perturbations such that the linearized operator of (\ref{EqCR}) is surjective, for all $u$  solving the boundary value problem \eqref{EqCR} and \eqref{EqLimit} for the Hamiltonian $H'=H+h$ with $h\in\mathscr{U}_{reg}$. 
\subsubsection{Moduli space and Floer homology}
For every $H'=H+h$ with $h\in\mathscr U_{reg}$ and every pair of 1-periodic orbits $x^\pm\in\sP(H';\alpha)$, the moduli space $\mathscr{M}(x^-,x^+;H',J_0;\al)$ that is the space of solutions of the boundary value problem (\ref{EqCR}), (\ref{EqLimit}), is a smooth manifold of dimension $\mu_{CZ}(x^-)-\mu_{CZ}(x^+)$ near a
solution $u$, where $\mu_{CZ}$ is the Conley--Zehnder index.  It
follows from the compactness that the moduli space of index 1 modulo time shift, denoted by $\mathscr{M}^1(x^-,x^+;H',J_0;\al)/\R$, is a finite set for every pair $x^\pm\in\sP(H';\alpha)$ with $\mu_{CZ}(x^-)-\mu_{CZ}(x^+)=1$.  We next define the boundary operator $\partial^{H}$ as
\[\partial^{H}x := \sum_{y\in\sP^b(H;\alpha)} \sharp(\mathscr{M}^1(x,y;H',J_0;\al)/\R)\,y\]
for every $x\in\sP^b(H';\alpha)$ with $\mu_{CZ}(y)=\mu_{CZ}(x)+1$. The energy identity \eqref{EqEnergy} implies that $\FC^a(H;\alpha)$ is invariant under $\partial^H$. We then get a boundary operator $\left[\partial^{H}\right]:\ \FC^{[a, b)}(H;\alpha)\to \FC^{[a, b)}(H;\alpha)$ and the Floer homology
$\FH^{[a,b)}(H,\alpha) := \ker  \left[\partial^{H}\right]/{\rm
    im}\left[\partial^{H}\right].$ We remark that the Floer homology is defined independent of the choice of $h\in\mathscr U_{reg}$. 
\subsubsection{Homotopic invariance}
The above homology group $\FH^{[a,b)}(H,\alpha)$ is defined for a fixed Hamiltonian. When we have a smooth homotopy of Hamiltonians $H_s,\ s\in\R$ with $H_s=H_0$ when $s\leq 0$ and $H_s=H_1$ when $s\geq 1$,  we consider the following Cauchy-Riemann equation
\begin{equation}\label{EqCRs}
     \partial_su +J_0\partial_t  u -\nabla H_{s}(u,t)=0.
\end{equation}
The smooth solutions $u:\R\times \T^1\to RT^*\T^n$ of \eqref{EqCRs} are connecting orbits between two periodic orbits with the same Conley--Zehnder index. Namely we have uniformly in $t\in \T^1$
the limits \begin{equation*}\label{EqBC}
     \lim_{s\to-\infty} u(s,t)
     =x_0(t), \quad
     \lim_{s\to+\infty} u(s,t)
     =x_1(t), \quad
     \lim_{s\to\pm\infty} \partial_su(s,t)
     =0
\end{equation*}
where $x_i(t)\in \sP(H_i,\al),\ i=0,1$ and $\mu_{CZ}(x_0)=\mu_{CZ}(x_1)$.
We have the energy identity
\begin{equation*}\label{EqEnergys}
     \mathcal E(u) =\mathcal{A}_{H_0}(x_0)-\mathcal{A}_{H_1}(x_1)-\int_{-\infty}^{\infty}\int_{\T^1}\left(\partial_s H_{s}\right)(u(s,t),t) \, dt\, ds.
\end{equation*}
Similar to Section \ref{sssctCpt} we can find a second category subset of \emph{regular homotopies} among all homotopies such that  the linearized operator of (\ref{EqCRs}) is surjective, for all elements $u$ of the moduli spaces $\mathscr M(x_0,x_1;H_s,J_0;\alpha)$ (see also Section 2.1 of \cite{W}).   

Counting solutions of \eqref{EqCRs} defines the Floer chain map from $\FC(H_0,\al)$ to $\FC(H_1,\al)$. \subsubsection{Monotone homotopy}\label{sssctMono}    
Next, given $-\infty\leq a<b\leq\infty,$ we define \begin{equation}\label{EqHab}
\sH^{a,b}(RT^*\T^n;\al):=\left\{H\in \sC_{cpt}^\infty(RT^*\T^n\times\T^1,\R)\ |\ a,b\notin\cS(H,\al)\right\}
\end{equation}
as the set of compactly supported Hamiltonians whose action spectrum do not contain $a,b$. 
Suppose there are two Hamiltonians $H_0,H_1\in\mathscr H^{a,b}(RT^*\T^n;\alpha)$ satisfying
$$
H_0(p,q,t) \ge H_1(p,q,t),\quad \forall\ (p,q,t)\in RT^*\T^n\times \T^1
$$
 and that are nondegenerate in the sense of $(\star)$. Then there exists a
homotopy $s\mapsto H_s$ from $H_0$ to $H_1$ such that $ \partial_s H_s\le 0$, which is called a {\it monotone} homotopy.  A monotone homotopy gives rise to a
{\it monotone} homomorphism
\begin{equation}\label{EqHomo}
\sigma_{H_1H_0}:\FH^{[a,b)}(H_0;\alpha)\to \FH^{[a,b)}(H_1;\alpha),
\end{equation}
which is independent of the choice of the monotone homotopy of
Hamiltonians defining it. We have the composition rule
\begin{equation}\label{Eqcomp}
\sigma_{H_2H_1}\circ\sigma_{H_1H_0} = \sigma_{H_2H_0},
\end{equation}
for any $H_0,H_1,H_2\in \mathscr H^{a,b}(RT^*\T^n;\alpha)$ satisfy $H_0\ge H_1\ge
H_2$, and $\sigma_{HH}=\mathrm{id}$ for every $H \in \mathscr H^{a,b}(RT^*\T^n;\alpha)$.

To make the homomorphism $\sigma_{H_1H_0}$ an isomorphism, we need the following proposition.
\begin{Prop}[Proposition 4.5.1 of \cite{BPS}] \label{PropHomotopy}
   Consider $a<b,\ a,b\in[-\infty,\infty]$, a nontrivial homotopy class $\alpha\in \pi_1(T^*\T^n)$, and two Hamiltonians $H_0,H_1\in \mathscr H^{a,b}(RT^*\T^n;\alpha)$ with $H_0\geq H_1$ and with a monotone
   homotopy $\{H_s\}_{0\leq s\leq 1}$ satisfying
   $H_s\in \mathscr H^{a,b}(RT^*\T^n;\alpha)$, $\forall\ s\in[0,1]$.  Then the homomorphism
   $\sigma_{H_1H_0}$ in \eqref{EqHomo} 
   is an isomorphism. 
\end{Prop}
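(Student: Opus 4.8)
\emph{Strategy and the analytic input.} The plan is to reduce, by subdividing the parameter interval $[0,1]$ and using the composition rule \eqref{Eqcomp}, to the case in which $H_0$ and $H_1$ differ by an arbitrarily small constant, and then to identify $\sigma_{H_1H_0}$ as an isomorphism by sandwiching it between the continuation maps that subtract that constant, which are isomorphisms for an elementary reason. The only genuinely analytic input is a \emph{uniform spectral gap}: I would first show there is $\delta>0$ with $\cS(H_s;\al)\cap\big([a-\delta,a+\delta]\cup[b-\delta,b+\delta]\big)=\emptyset$ for every $s\in[0,1]$ (with the obvious modification if $a$ or $b$ is infinite). To this end set $M:=\sup_{s\in[0,1]}\|\partial_sH_s\|_{C^0}<\infty$; since $\al$ is nontrivial, every orbit in $\sP(H_s;\al)$ is non-constant, hence an ODE solution whose initial value lies in the fixed compact set $K_0\subset RT^*\T^n$ containing $\mathrm{supp}(H_s)$ for all $s$, and which depends smoothly on $(s,x(0))$; therefore $\Pi:=\{(s,x):s\in[0,1],\ x\in\sP(H_s;\al)\}$ is compact, the action $(s,x)\mapsto\cA_{H_s}(x)$ on it has closed image, and by the hypothesis $a,b\notin\cS(H_s;\al)$ this image avoids $a$ and $b$, which produces $\delta$.

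\emph{Reduction to constant-close Hamiltonians.} Fix $0<\delta'\le\delta$ and choose $0=s_0<s_1<\cdots<s_N=1$ fine enough that $H_{s_{k-1}}-H_{s_k}\le M(s_k-s_{k-1})\le\delta'$ for each $k$. Each $H_{s_k}$ lies in $\sH^{a,b}(RT^*\T^n;\al)$, and \eqref{Eqcomp} gives
\[\sigma_{H_1H_0}=\sigma_{H_{s_N}H_{s_{N-1}}}\circ\cdots\circ\sigma_{H_{s_1}H_{s_0}},\]
so it suffices to prove that each factor is an isomorphism. Writing $H:=H_{s_{k-1}}$ and $G:=H_{s_k}$, we are reduced to: $\sigma_{GH}$ is an isomorphism whenever $H\ge G\ge H-\delta'$ with $H,G\in\sH^{a,b}(RT^*\T^n;\al)$.

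\emph{The sandwich.} Since $\delta'\le\delta$, the uniform gap gives that $\cS(H;\al)$ and $\cS(G;\al)$ meet neither $[a,a+\delta']$ nor $[b,b+\delta']$; hence $H-\delta'$ and $G-\delta'$ again have $a,b$ outside their action spectra and define filtered Floer homologies (one works in the harmless enlargement of the admissible class to Hamiltonians that are constant outside a compact subset of $RT^*\T^n$, since only the compactly supported vector field enters \eqref{EqCR} and \eqref{EqCRs}). From $H\ge G\ge H-\delta'\ge G-\delta'$ and \eqref{Eqcomp} we obtain monotone homomorphisms $f:=\sigma_{GH}$, $g:=\sigma_{(H-\delta')G}$, $h:=\sigma_{(G-\delta')(H-\delta')}$ with $g\circ f=\sigma_{(H-\delta')H}$ and $h\circ g=\sigma_{(G-\delta')G}$. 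Now for $K\in\{H,G\}$ the map $\sigma_{(K-\delta')K}$ is an isomorphism: $K$ and $K-\delta'$ have the same Hamiltonian vector field, hence the same $1$-periodic orbits and the same solutions of \eqref{EqCR}, while $\cA_{K-\delta'}=\cA_K-\delta'$, so by the gap $\sP^{[a,b)}(K-\delta';\al)=\sP^{[a,b)}(K;\al)$ and $\FC^{[a,b)}(K-\delta';\al)=\FC^{[a,b)}(K;\al)$ as chain complexes; computing $\sigma_{(K-\delta')K}$ with the parameter-independent monotone homotopy $u\mapsto K-\tau(u)\delta'$ (for which \eqref{EqCRs} coincides with the Floer equation of $K$), the induced chain map is upper triangular and unipotent with respect to the action filtration---the only index-zero connecting trajectory from an orbit to itself is the stationary one---hence an isomorphism. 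Granting this: $g\circ f$ injective $\Rightarrow$ $f$ injective; $g\circ f$ surjective $\Rightarrow$ $g$ surjective; $h\circ g$ injective $\Rightarrow$ $g$ injective; so $g$ is an isomorphism and $f=g^{-1}\circ(g\circ f)$ is one as well. Composing over $k$ completes the proof.

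\emph{Main obstacle.} I expect the real work to lie in the first step---converting the pointwise hypothesis $a,b\notin\cS(H_s;\al)$ into a uniform gap, i.e. establishing the compactness of the space of $1$-periodic orbits along the whole homotopy together with continuity of the action on it (and the uniform control of the supports). Everything after that is bookkeeping, the one point requiring care being that the subdivision must be taken fine \emph{relative to} $\|\partial_sH_s\|_{C^0}$ so that consecutive Hamiltonians differ by at most $\delta'$; the last step rests on the standard fact that continuation along a parameter-independent Floer equation acts as the identity on the associated graded of the action filtration.
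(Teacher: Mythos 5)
The paper does not prove this proposition; it is quoted verbatim from \cite{BPS} (Proposition 4.5.1) and used as a black box, so there is no in-paper proof to compare against. Your argument is, to the best of my recollection, the same strategy used in \cite{BPS}: (i) convert the pointwise hypothesis $a,b\notin\cS(H_s;\al)$ into a uniform spectral gap $\delta$ by a compactness argument on the space $\Pi$ of pairs $(s,x)$, exploiting that a nontrivial class $\al$ forces every $1$-periodic orbit to lie entirely inside the (uniformly compact) support; (ii) subdivide $[0,1]$ finely enough that consecutive $H_{s_k}$ differ by at most $\delta'\le\delta$ in $C^0$; (iii) sandwich each $\sigma_{H_{s_k}H_{s_{k-1}}}$ between two constant-shift continuation maps $\sigma_{(K-\delta')K}$, which are isomorphisms, and invert. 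That logic is sound, and the two-out-of-six style deduction ($g\circ f$ and $h\circ g$ isomorphisms $\Rightarrow g$ isomorphism $\Rightarrow f$ isomorphism) is correctly executed.

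Two points worth tightening. First, when you compute $\sigma_{(K-\delta')K}$ using the parameter-independent homotopy $u\mapsto K-\tau(u)\delta'$, that homotopy is generally not regular, so the moduli spaces in \eqref{EqCRs} cannot be counted directly; the careful version either appeals to independence of the continuation map from the (regular) homotopy together with $\sigma_{KK}=\mathrm{id}$ and the chain-level identification $\FC^{[a,b)}(K-\delta';\al)=\FC^{[a,b)}(K;\al)$ you establish, or perturbs to a nearby regular homotopy and uses the uniform lower bound on the energy of a nonconstant Floer cylinder (together with an $O(\delta')$ energy budget) to conclude the chain map is unipotent with respect to the action filtration. You gesture at both but state neither in full. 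Second, a small side remark: the energy identity for the $s$-dependent equation as printed in Section 2 of this paper carries the term $-\int\partial_sH_s$ where the standard derivation (with this paper's sign conventions for $\omega_0$, $X_H$ and $\cA_H$) gives $+\int\partial_sH_s$; the correct sign is what actually makes monotone homotopies decrease the action, which is implicitly what your filtration argument needs, so your reasoning is consistent with the correct formula even though it disagrees with the one typed in the paper.
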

\subsection{Direct and inverse limits}\label{ssctLimits}
In this section, we introduce the direct and inverse limits of Floer homology groups. Our setting is slightly more general than needed to prove Theorem \ref{ThmCorner} but less general than that in \cite{BPS,W}. We denote by $V$ an open connected and bounded subset of $\R^n$, so $V\times\T^n$ is a subset of $RT^*\T^n$ for $R$ large enough. 
\subsubsection{Partial order on $\mathscr C^\infty_{cpt}(RT^*\T^n\times \T^1,\R)$}
We introduce a partial order on \\
$\mathscr C^\infty_{cpt}(RT^*\T^n\times \T,\R)$ defined by 
\[H_0\preceq H_1\quad\Leftrightarrow\quad  H_0(p,q,t)\geq H_1(p,q,t),\quad \forall\ (p,q,t)\in RT^*\T^n\times \T^1.\]
We get a partially ordered system $(\FH,\sigma)$ of $\Z_2$-vector spaces over $\sH^{a,b}(RT^*\T^n;\al)$ defined in \eqref{EqHab}. Namely, $\FH$ assigns to each $H\in\sH^{a,b}(RT^*\T^n;\al)$ the
$\Z_2$-vector space ${\FH}^{[a,b)}(H;\alpha)$, and $\sigma$ assigns to each pair $H_0\preceq H_1$ of $\sH^{a,b}(RT^*\T^n;\al)$ the monotone homomorphism
$ \sigma_{H_1H_0}$
with the composition rule (\ref{Eqcomp}).
\subsubsection{Inverse limit}\label{sssctInverseLim}
We restrict the partially ordered system $(\sH^{a,b}(RT^*\T^n;\al),\preceq)$ to a partially ordered system  $(\sH^{a,b}(V;\al),\preceq)$ where we define 
\begin{equation*}
\sH^{a,b}(V;\al):=\sH^{a,b}(RT^*\T^n;\al)\cap \sC^\infty_{cpt}(V\times \T^n\times \T^1,\R).
\end{equation*}
The partial order system $(\sH^{a,b}(V;\al),\preceq)$ is {\it downward directed}: For all $H_1,H_2\in\sH^{a,b}(V;\al)$ there exists $H_0\in\sH^{a,b}(V;\al)$ such that $H_0\preceq H_1$ and $H_0\preceq H_2$. The functor $(\FH,\sigma)$ is an \emph{inverse system of $\Z_2$-vector spaces over $\sH^{a,b}(V;\al)$} which has an {\it inverse limit}:
\begin{equation*}
\begin{aligned}
    \underleftarrow{\mathrm{SH}}^{[a,b)}(V;\alpha)  &:={\underleftarrow{\lim}}
     \FH^{[a,b)}(H;\alpha) \\
    &:=\left\{\left.\{a_H\}
     \in\prod_H
     \FH^{[a,b)}(H;\alpha)\ \right|\ 
     H_0\preceq H_1 \Rightarrow
     \sigma_{H_1H_0}(a_{H_0})=a_{H_1}\right\},
\end{aligned}
\end{equation*}
where $H,H_0,H_1\in\sH^{a,b}(V;\al)$. Next denote by
\begin{equation*}\label{EqpiH}
     \pi_H:\    \underleftarrow{\mathrm{SH}}^{[a,b)}(V;\alpha)  \to  \FH^{[a,b)}(H;\alpha)
\end{equation*}
 the projection to the component corresponding to $H\in\sH^{a,b}(V;\al)$. We have $$\pi_{H_1}=\sigma_{H_1H_0}\circ\pi_{H_0},\quad \mathrm{if\ }H_0\preceq H_1.$$
\subsubsection{Direct limit}\label{sssctDirectLim} Next, we fix $c>0$ and a point $p^*\in V$ and consider the subset
\begin{equation}\label{EqHabc}
     \sH^{a,b}_{c}(V,p^*;\al):=\left\{H\in \sH^{a,b}(V;\al)\ |\   H (p^*,q,t)>c\right\}.
\end{equation}
This set is {\it upward directed}. Namely, for all $H_0,H_1\in\sH^{a,b}_{c}(V,p^*;\al)$ there exists $H_2\in\sH^{a,b}_{c}(V,p^*;\al)$ such that $H_0\preceq H_2$ and
$H_1\preceq H_2$. The functor $(\FH,\sigma)$ is a {\it direct system of $\Z_2$-vector spaces over $\sH^{a,b}_{c}(V,p^*;\al)$}, whose \emph{direct limit} is defined as
\begin{equation*}
\begin{aligned}
     \underrightarrow{\mathrm{SH}}^{[a,b);c}(V,p^*;\alpha)
    &:=
     {\underrightarrow{\lim}}
     \FH^{[a,b)}(H;\alpha) \\
    &:=\left\{ (H,a_H)\ \left|\ 
     a_H\in \FH^{[a,b)}(H;\alpha)\right.\right\}
     \Big/\sim,
\end{aligned}
\end{equation*}
where $H\in\sH^{a,b}_{c}(V,p^*;\al),$ and the equivalence relation is defined as follows, $(H_0,a_{H_0})\sim(H_1,a_{H_1})$ if and only if there exists
$H_2\in\sH^{a,b}_{c}(V,p^*;\al)$ such that $H_0\preceq H_2$, $H_1\preceq H_2$ and $\sigma_{H_2H_0}(a_{H_0})=\sigma_{H_2H_1}(a_{H_1})$.
The direct limit is a
$\Z_2$-vector space with the operations
$$
     k[H_0,a_{H_0}]:=[H_0,ka_{H_0}],\quad  [H_0,a_{H_0}]+[H_1,a_{H_1}]:=[H_2,\sigma_{H_2H_0}(a_{H_0})+\sigma_{H_2H_1}(a_{H_1})],
$$
for all $k\in\Z_2$ and $H_2\in\sH^{a,b}_{c}(V,p^*;\al)$ such that $H_0\preceq H_2$ and $H_1\preceq H_2$. We define the homomorphism
\begin{equation*}\label{EqiH}
     \iota_H: \  \FH^{[a,b)}(H;\alpha)\to  \underrightarrow{\mathrm{SH}}^{[a,b);c}  (V,p^*;\alpha),\qquad   a_H\mapsto [H,a_H].
\end{equation*}
We have $\iota_{H_0}=\iota_{H_1}\circ \sigma_{H_1H_0}$, if $H_0\preceq H_1$.
\subsection{Exhausting sequence}\label{ssctExhaust}
We next introduce the notion of
exhausting sequences following \cite{BPS}.  Let $(\G,\sigma)$ be a partially ordered system
of $R$-modules over a partially ordered set $(I,\preceq)$ and denote $
\Z^\pm:=\left\{\nu\in\Z\,|\,\pm\nu>0\right\}.  $  We say a sequence
$\{i_\nu,\ \nu\in\Z^+\}$ (resp. $\{i_\nu,\ \nu\in\Z^-\}$) is {\it upward exhausting} (resp. {\it downward exhausting}) for
$(\G,\sigma)$, if
\begin{itemize}
  \item for all $\nu\in\Z^+$ (resp. $\{i_\nu,\ \nu\in\Z^-\}$), we have $i_\nu\preceq i_{\nu+1}$ (resp. $i_{\nu-1}\preceq
   i_\nu$) and that
   $\sigma_{i_{\nu+1}i_\nu}:\G_{i_\nu}\to\G_{i_{\nu+1}}$ (resp. $\sigma_{i_\nu i_{\nu-1}}:\G_{i_{\nu-1}}\to\G_{i_\nu}$) is an
   isomorphism;
  \item for all $i\in I$, there exists a $\nu\in\Z^+$ with $i\preceq i_\nu$ (resp. $\nu\in\Z^-$ with $i_\nu\preceq i$).
\end{itemize}
The next lemma shows how we can compute direct and inverse limits using exhausting sequences.
\begin{Lm}[Lemma 4.7.1 of \cite{BPS}] \label{LmExhaust}
   Consider $(\G,\sigma)$ a partially ordered system of $R$-modules over a partially ordered set  $(I,\preceq)$.
 Suppose $\{i_\nu,\ \nu\in\Z^+\}$ is upward exhausting $($resp. $\{i_\nu,\ \nu\in\Z^-\}$ is downward exhausting$)$ for
   $(\G,\sigma)$, then we have that the
   homomorphism $ \iota_{i_\nu}:\G_{i_\nu}\to\varinjlim\G $ $($resp. $ \pi_{i_\nu}:\varprojlim\G\to\G_{i_\nu} $$)$ is an
   isomorphism for all $\nu\in\Z^+$.
\end{Lm}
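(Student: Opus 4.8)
The plan is to prove both halves by the standard cofinality (``passing to a cofinal/coinitial subsystem'') argument, carried out concretely with the structure maps. Throughout, write $\sigma_{ji}\colon\G_i\to\G_j$ for the transition map attached to a relation $i\preceq j$, and recall the composition rule $\sigma_{kj}\circ\sigma_{ji}=\sigma_{ki}$, $\sigma_{ii}=\mathrm{id}$, together with the compatibilities $\iota_{i_0}=\iota_{i_1}\circ\sigma_{i_1i_0}$ and $\pi_{i_1}=\sigma_{i_1i_0}\circ\pi_{i_0}$ valid when $i_0\preceq i_1$. The single recurring ingredient is that, along an upward exhausting sequence, every composite $\sigma_{i_\mu i_\nu}$ with $\nu\le\mu$ is an isomorphism (it is the finite composition $\sigma_{i_\mu i_{\mu-1}}\circ\cdots\circ\sigma_{i_{\nu+1}i_\nu}$ of the isomorphisms supplied by the first bullet of the definition), and symmetrically for $\sigma_{i_\nu i_\mu}$ with $\mu\le\nu$ along a downward exhausting sequence. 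All identifications below are then elementary diagram chases.

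For the upward exhausting case I would show $\iota_{i_\nu}\colon\G_{i_\nu}\to\varinjlim\G$ is bijective. \emph{Injectivity:} if $\iota_{i_\nu}(a)=0$, then by construction of the direct limit there is $j\succeq i_\nu$ with $\sigma_{ji_\nu}(a)=0$; the second bullet gives $\mu\in\Z^+$, which after enlarging we may assume satisfies $\mu\ge\nu$, with $j\preceq i_\mu$; then $\sigma_{i_\mu i_\nu}(a)=\sigma_{i_\mu j}\sigma_{ji_\nu}(a)=0$, and since $\sigma_{i_\mu i_\nu}$ is an isomorphism, $a=0$. \emph{Surjectivity:} an arbitrary class in $\varinjlim\G$ has a representative $(j,b)$; choosing $\mu\ge\nu$ with $j\preceq i_\mu$, we have $[j,b]=\iota_{i_\mu}\bigl(\sigma_{i_\mu j}(b)\bigr)$; but $\iota_{i_\nu}=\iota_{i_\mu}\circ\sigma_{i_\mu i_\nu}$ with $\sigma_{i_\mu i_\nu}$ invertible, so $\iota_{i_\mu}$ and $\iota_{i_\nu}$ have the same image, whence $[j,b]\in\mathrm{im}\,\iota_{i_\nu}$.

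For the downward exhausting case I would show $\pi_{i_\nu}\colon\varprojlim\G\to\G_{i_\nu}$ is bijective. \emph{Injectivity:} suppose $\{a_i\}\in\varprojlim\G$ has $a_{i_\nu}=0$; given any $j\in I$, pick (by the second bullet, then decreasing along the chain) $\mu\le\nu$ with $i_\mu\preceq j$; from coherence $a_{i_\nu}=\sigma_{i_\nu i_\mu}(a_{i_\mu})$ and invertibility of $\sigma_{i_\nu i_\mu}$ we get $a_{i_\mu}=0$, hence $a_j=\sigma_{ji_\mu}(a_{i_\mu})=0$; so $\{a_i\}=0$. \emph{Surjectivity} is the one step with real content: given $b\in\G_{i_\nu}$, define, for each $j\in I$, $a_j:=\sigma_{ji_\mu}\bigl(\sigma_{i_\nu i_\mu}^{-1}(b)\bigr)$ for some $\mu\le\nu$ with $i_\mu\preceq j$. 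One then checks (i) independence of the choice of $\mu$, by comparing two admissible $\mu,\mu'$ after passing to a common index $\mu''\le\min(\mu,\mu',\nu)$ deep enough in the sequence and applying the composition rule; (ii) the coherence $\sigma_{j'j}(a_j)=a_{j'}$ for $j\preceq j'$, by choosing a single $\mu$ admissible for both and using $\sigma_{j'j}\circ\sigma_{ji_\mu}=\sigma_{j'i_\mu}$; and (iii) $a_{i_\nu}=b$, by taking $\mu=\nu$ so that the formula reduces to $\sigma_{i_\nu i_\nu}\sigma_{i_\nu i_\nu}^{-1}(b)=b$. Then $\{a_j\}\in\varprojlim\G$ and $\pi_{i_\nu}(\{a_j\})=b$.

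The only place that demands care, and hence the ``main obstacle'' (a mild one), is the well-definedness of the family $\{a_j\}$ in the inverse-limit surjectivity: because the exhausting property furnishes, for each $j$ separately, possibly very different indices $\mu(j)$, one cannot use a single uniform $\mu$, and must consistently reduce comparisons to a common index far out in the sequence while tracking which transition maps are inverted. Everything else is bookkeeping; in fact this is precisely the general categorical statement that a coinitial (resp. cofinal) subsystem computes the inverse (resp. direct) limit, so one could alternatively just invoke that fact.
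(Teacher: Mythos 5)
The paper itself does not reprove this lemma; it simply cites Lemma~4.7.1 of \cite{BPS}, which is established there by exactly the kind of cofinality/coinitiality argument you give. Your proof is correct: the composition rule forces every composite $\sigma_{i_\mu i_\nu}$ along the exhausting sequence to be an isomorphism, and the second bullet makes the sequence cofinal (resp.\ coinitial) in $I$, so the diagram chases for injectivity and surjectivity of $\iota_{i_\nu}$ (resp.\ $\pi_{i_\nu}$), including the well-definedness check in the inverse-limit surjectivity step, all go through as you describe.
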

\subsection{Capacities}\label{ssctCapacity}
In this section, we introduce two capacities. The first one, called homological relative capacity, is defined by the existence of non vanishing homomorphism between the direct and inverse limits. The second one, called the symplectic relative capacity, is defined by the existence of certain periodic orbits. 
\subsubsection{Symplectic homology}
We cite the following proposition from \cite{BPS}.
\begin{Prop}[Proposition 4.8.1 of \cite{BPS}] \label{PropFactor}
   Consider $a<b,\ a,b\in[-\infty,\infty]$, and a nontrivial homotopy class $\alpha\in \pi_1(\T^n)$.  Then, for any
   $c\in\R$, there exists a unique homomorphism
   $$
   T_\alpha^{[a,b);c}:\iS^{[a,b)}(V;\alpha)\to\dS^{[a,b);c}(V,p^*;\alpha)
   $$
   such that for every $H\in\mathscr H^{a,b}_c(V,p^*;\alpha)$, the following diagram commute
   \[
   \xymatrix{ {\iS^{[a,b)}(V;\alpha)} \ar[rr]^-{T_{\alpha}^{[a,b);c}}
     \ar[dr]_-{\pi_{_H}}
     && {\dS^{[a,b);c}(V,p^*;\alpha)} \\
     & \FH^{[a,b)}(H;\alpha) \ar[ur]_-{\iota_{_H}} },
   \] 
 where $\pi_{_{H}}$ and $\iota_{_{H}}$
   are the homomorphisms in
   Section \ref{sssctInverseLim} and \ref{sssctDirectLim}.
\end{Prop}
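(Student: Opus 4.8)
The plan is to run a purely diagrammatic argument: I will show that the composite $\iota_{H}\circ\pi_{H}$ does not depend on the choice of $H\in\sH^{a,b}_c(V,p^*;\al)$, and then declare $T_\alpha^{[a,b);c}$ to be this common value. First I would record that $\sH^{a,b}_c(V,p^*;\al)\neq\emptyset$: take any $\sC^\infty$ bump function supported in $V\times\T^n$ whose value at $p^*$ exceeds $c$, and perturb it near its $1$-periodic orbits in the class $\al$, as in the discussion following condition $(\star)$ (cf. Section 2.1 of \cite{W}), so that the resulting Hamiltonian is nondegenerate and has action spectrum avoiding $a$ and $b$. Since $\sH^{a,b}_c(V,p^*;\al)\subset\sH^{a,b}(V;\al)$, the projection $\pi_H$ of the inverse system and the coprojection $\iota_H$ of the direct system are both defined for every $H\in\sH^{a,b}_c(V,p^*;\al)$, and the required commutativity forces $\iota_H\circ\pi_H=T_\alpha^{[a,b);c}$ for all such $H$; since the index set is nonempty this already pins down $T_\alpha^{[a,b);c}$, proving uniqueness.

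For existence, set $T:=\iota_{H}\circ\pi_{H}$ for one fixed $H$ and check independence of the choice. Given $H_0,H_1\in\sH^{a,b}_c(V,p^*;\al)$, use that this set is upward directed to pick $H_2\in\sH^{a,b}_c(V,p^*;\al)$ with $H_0\preceq H_2$ and $H_1\preceq H_2$. All three Hamiltonians lie in the inverse system $\sH^{a,b}(V;\al)$, so the relation $\pi_{H_1}=\sigma_{H_1H_0}\circ\pi_{H_0}$ from Section \ref{sssctInverseLim} yields $\pi_{H_2}=\sigma_{H_2H_0}\circ\pi_{H_0}$; all three also lie in the direct system $\sH^{a,b}_c(V,p^*;\al)$, so the relation $\iota_{H_0}=\iota_{H_1}\circ\sigma_{H_1H_0}$ from Section \ref{sssctDirectLim} yields $\iota_{H_0}=\iota_{H_2}\circ\sigma_{H_2H_0}$. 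Hence
\[
\iota_{H_0}\circ\pi_{H_0}=\iota_{H_2}\circ\sigma_{H_2H_0}\circ\pi_{H_0}=\iota_{H_2}\circ\pi_{H_2},
\]
and the same computation with $H_1$ in place of $H_0$ gives $\iota_{H_1}\circ\pi_{H_1}=\iota_{H_2}\circ\pi_{H_2}$. Therefore $\iota_{H_0}\circ\pi_{H_0}=\iota_{H_1}\circ\pi_{H_1}$, so $T$ is well-defined; it is $\Z_2$-linear as a composition of $\Z_2$-linear maps, and the triangle in the statement commutes by the very definition of $T$.

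There is no analytic content in this proposition: everything rests on the composition rule \eqref{Eqcomp} together with the two compatibility identities for $\pi_H$ and $\iota_H$ established earlier, and the argument is the standard "inverse limit maps into each stage, each stage maps to the direct limit, and these glue". The only points worth stating explicitly — and the closest thing to an obstacle — are the nonemptiness of $\sH^{a,b}_c(V,p^*;\al)$ and the remark that the auxiliary Hamiltonian $H_2$ can be chosen simultaneously in the direct system $\sH^{a,b}_c(V,p^*;\al)$ (so that $\iota_{H_2}$ makes sense) and in the inverse system $\sH^{a,b}(V;\al)$ (so that $\pi_{H_2}$ makes sense); this is automatic because the former is a subset of the latter, and it is precisely what allows both compatibility relations to be applied to the single triple $H_0,H_1,H_2$.
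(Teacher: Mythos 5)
Your proof is correct, and it is essentially the standard argument for this kind of factorization statement through inverse and direct limits. Note that the paper itself does not prove this proposition — it is quoted verbatim as Proposition 4.8.1 of \cite{BPS} — so there is no in-text proof for a direct comparison, but your argument (independence of $\iota_H\circ\pi_H$ from the choice of $H$, established via the upward directedness of $\sH^{a,b}_c(V,p^*;\al)$ together with the two compatibility identities $\pi_{H_2}=\sigma_{H_2H_0}\circ\pi_{H_0}$ and $\iota_{H_0}=\iota_{H_2}\circ\sigma_{H_2H_0}$) is precisely the standard diagram-chase that one finds in \cite{BPS}. You are also right to flag the two points that actually need saying: that $\sH^{a,b}_c(V,p^*;\al)$ is nonempty (so that uniqueness is not vacuous), and that it is a subset of the inverse system's index set $\sH^{a,b}(V;\al)$, which is what lets both $\pi_H$ and $\iota_H$ be applied to the same Hamiltonian and both compatibility relations be invoked for the same triple $H_0,H_1,H_2$. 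The $\Z_2$-linearity observation at the end is a harmless but correct closing remark. In short: right approach, correct details, nothing missing.
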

\subsubsection{The homological relative capacity}
Following \cite{BPS} we define two capacities.
For every nontrivial homotopy class $\alpha\in\pi_1(\T^n)$
and real number $c>0$ we first define the set
$$
\mathscr A_c(V,p^*;\alpha) := \left\{a\in\R \,\Big|\ T_\alpha^{[a,\infty);c} \mathrm{\ in\ Proposition\ \ref{PropFactor}\ does\ not\ vanish} \right\}.
$$
 We next define the {\it
  homological relative capacity} of $(V,p^*)$ as the function
$$
\widehat{C}(V,p^*): \pi_1(\T^n)\times[-\infty,\infty)\to[0,\infty]
$$
\begin{equation}\label{EqCapacityHat}
(\al,a)\mapsto\widehat{C}(V,p^*;\alpha,a) := \inf\left\{c > 0\ |\ \sup
   \mathscr A_c(V,p^*;\alpha) > a \right\}.
\end{equation}
Here we use the convention that $\inf\emptyset = \infty$ and $\sup
\emptyset = -\infty$. 
\subsubsection{A relative symplectic capacity}
We define the BPS type relative symplectic capacity as
\begin{equation}\label{EqCap2}
\begin{aligned}
&C(V,p^*;\al,a):=\inf\left\{c>0\ |\ \forall\ H\in \mathscr{H}_c(V,p^*),\ \right. \\
&\left.\exists\ x\in \mathscr{P}(H;\al)\ \mathrm{such\ that}\ \mathcal{A}_H(x)\geq a\right\},
\end{aligned}
\end{equation}
where 
\begin{equation}\begin{aligned}\label{EqHamV}\mathscr H_c(V,p^*):&=\left\{H\in \mathscr C^\infty(RT^*\T^n\times\T^1,\R)\ |\ \right.\\
&\left. H(p^*,q,t)\geq c,\ \mathrm{and}\ H(p,q,t)=0\ \mathrm{for\ }p\in \R^n\setminus V\right\}.\end{aligned}\end{equation}
We get the existence of periodic orbits if we can bound $C(V,p^*;\al,a)$ from above.
\begin{Prop}[Proposition 4.9.1 of \cite{BPS}] \label{PropCap}
  Consider a number  $a\in\R$ and a nontrivial homotopy class $\alpha\in \pi_1(\T^n)$.  Suppose
   $\widehat{C}(V,p^*;\alpha,a) < \infty$ then every
   Hamiltonian $H\in \sH_c(V,p^*)$ with $c\ge\widehat{C}(V,p^*;\alpha,a)$ has a $1$-periodic orbit in the
   homotopy class $\alpha$ with action $\mathcal A_H(x)\geq a$.  In
   particular, we have
   $$
   \widehat{C}(V,p^*;\alpha,a) \ge C(V,p^*;\alpha,a).
   $$
\end{Prop}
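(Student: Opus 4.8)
The plan is to deduce this proposition almost immediately from the definitions of the two capacities together with Lemma \ref{LmExhaust} applied through Proposition \ref{PropFactor}. The key observation is that the two capacities are built from the same circle of ideas: $\widehat C$ detects non-vanishing of the comparison map $T_\alpha^{[a,\infty);c}$, while $C$ detects the existence of an orbit with action at least $a$; the bridge between them is the factorization $T_\alpha^{[a,\infty);c}=\iota_H\circ\pi_H$ through $\FH^{[a,\infty)}(H;\alpha)$, which forces this Floer group to be nonzero whenever $T_\alpha^{[a,\infty);c}\neq 0$, and a nonzero Floer group on the interval $[a,\infty)$ means there is a generator, i.e. an orbit $x\in\sP(H;\alpha)$ with $\mathcal A_H(x)\ge a$.

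First I would take $H\in\sH_c(V,p^*)$ with $c\ge\widehat C(V,p^*;\alpha,a)$ and reduce to the nondegenerate case: by the remark after condition $(\star)$ and the stability of the strict inequality $H(p^*,q,t)>c'$ for some $c'$ slightly larger than $\widehat C$, one may perturb $H$ slightly (supported away from $p^*$ and near the orbits) to a nondegenerate $H'$ still lying in $\sH^{a,\infty}_{c'}(V,p^*;\alpha)$ with $c'\ge\widehat C(V,p^*;\alpha,a)$, and it suffices to produce the orbit for $H'$ (a $\mathscr C^\infty$-limiting/compactness argument recovers an orbit of $H$, or one argues directly that the action bound survives the limit). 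Next, by the definition \eqref{EqCapacityHat} of $\widehat C$, the condition $c'\ge\widehat C(V,p^*;\alpha,a)$ gives $\sup\mathscr A_{c'}(V,p^*;\alpha)>a$, hence there is some $a'>a$ with $T_\alpha^{[a',\infty);c'}\neq 0$. Since $H'\in\sH^{a',\infty}_{c'}(V,p^*;\alpha)$ (shrinking $a'$ toward $a$ if necessary so that $a'\notin\cS(H';\alpha)$, using that the spectrum is nowhere dense), the commuting triangle of Proposition \ref{PropFactor} shows $\iota_{H'}\circ\pi_{H'}=T_\alpha^{[a',\infty);c'}\neq 0$, and therefore $\FH^{[a',\infty)}(H';\alpha)\neq 0$.

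Then I would conclude: a nonzero homology group $\FH^{[a',\infty)}(H';\alpha)$ means the chain complex $\FC^{[a',\infty)}(H';\alpha)=\bigoplus_{x\in\sP^{[a',\infty)}(H';\alpha)}\Z_2 x$ is nonzero, so $\sP^{[a',\infty)}(H';\alpha)\neq\emptyset$, i.e. there is a $1$-periodic orbit $x$ of $H'$ in class $\alpha$ with $\mathcal A_{H'}(x)\ge a'>a$. Passing back from $H'$ to $H$ via the compactness of the space of periodic orbits for $\mathscr C^\infty$-convergent Hamiltonians (the action is continuous in this limit) yields a $1$-periodic orbit of $H$ in class $\alpha$ with $\mathcal A_H(x)\ge a$. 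The inequality $\widehat C(V,p^*;\alpha,a)\ge C(V,p^*;\alpha,a)$ is then just the definition \eqref{EqCap2}: every $c\ge\widehat C(V,p^*;\alpha,a)$ is an admissible competitor in the infimum defining $C(V,p^*;\alpha,a)$, so the infimum is no larger.

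The main obstacle is the nondegeneracy bookkeeping: the capacity $C$ is defined over the \emph{large} class $\sH_c(V,p^*)$ of not-necessarily-nondegenerate Hamiltonians (and with a non-strict inequality $H(p^*,q,t)\ge c$), whereas the Floer-theoretic machinery, the direct/inverse limits, and Proposition \ref{PropFactor} all live over $\sH^{a,b}_c(V,p^*;\alpha)$ where nondegeneracy and $a,b\notin\cS(H;\alpha)$ are required. So the real content of the argument is the perturbation-and-limit step: perturbing $H$ to a nearby admissible nondegenerate $H'$ without destroying the strict energy bound at $p^*$, choosing the endpoint $a'$ slightly above $a$ outside the (nowhere dense) action spectrum, and then extracting a genuine orbit of $H$ from orbits of the $H'$'s by the compactness in Section \ref{sssctCpt}. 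Once that is handled, everything else is a formal diagram chase through Proposition \ref{PropFactor} and unwinding the definition \eqref{EqCapacityHat}.
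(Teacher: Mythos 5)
Your skeleton is the right one and matches how the paper (following \cite{BPS}) intends the argument to go: by Proposition \ref{PropFactor} the comparison map factors as $T_\alpha^{[a',\infty);c'}=\iota_{H'}\circ\pi_{H'}$, so nonvanishing of $T_\alpha$ forces $\FH^{[a',\infty)}(H';\alpha)\neq 0$, a nonzero filtered group must have a generator in $\sP^{[a',\infty)}(H';\alpha)$, and one then passes to $H$ by $\sC^\infty$-compactness. The paper does not write this out (it cites the proof of Proposition 4.9.1 of \cite{BPS} verbatim), but the remark following the proposition flags \emph{two} discrepancies between $\sH_c(V,p^*)$ and $\sH^{a,\infty}_c(V,p^*;\alpha)$ that the approximation must bridge, and your reduction only handles one of them.

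The one you miss is compact support: an $H\in\sH_c(V,p^*)$ as in \eqref{EqHamV} vanishes for $p\notin V$, but its support may reach $\partial V$, so it generally does not lie in $\sC^\infty_{cpt}(V\times\T^n\times\T^1,\R)$, which every element of $\sH^{a,\infty}_{c'}(V,p^*;\alpha)$ must (see \eqref{EqHab} and the definition of $\sH^{a,b}(V;\al)$ in Section \ref{sssctInverseLim}). A transversality perturbation ``supported away from $p^*$ and near the orbits'' does not repair this; you must also truncate $H$ near $\partial V$. There is a second, smaller slip: $\sH_c(V,p^*)$ only requires $H(p^*,q,t)\ge c$, so in the borderline case $c=\widehat C(V,p^*;\alpha,a)$ with $\min_{q,t}H(p^*,q,t)=c$ there is no single $c'>\widehat C$ with $H(p^*,q,t)>c'$, and your perturbed $H'$ cannot be placed in $\sH^{a',\infty}_{c'}(V,p^*;\alpha)$ for a fixed $c'$. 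Both problems are resolved by the device you only gesture at in passing: take a genuine sequence $H_\nu\in\sH^{a_\nu,\infty}_{c_\nu}(V,p^*;\alpha)$ with $H_\nu$ nondegenerate, compactly supported inside $V\times\T^n\times\T^1$, $c_\nu\downarrow\widehat C$, $a_\nu\to a$ chosen outside the (nowhere dense) spectra, $H_\nu\to H$ in $\sC^\infty$; run the factorization argument for each $\nu$ to get orbits $x_\nu$ with $\mathcal A_{H_\nu}(x_\nu)\ge a_\nu$; and extract the limit orbit of $H$ with $\mathcal A_H(x)\ge a$ by the compactness of Section \ref{sssctCpt}. The final step, $\widehat C\ge C$, you do correctly by simply reading off definition \eqref{EqCap2}.
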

The proof of this proposition is a word by word translation of that of Proposition 4.9.1 of \cite{BPS}. We remark here that the function class $\sH^{a,b}_c(V,p^*)$ \eqref{EqHabc} (with $b=\infty$) in the definition of $\widehat C(V,p^*;\al,a)$ \eqref{EqCapacityHat} differs from the function class
$\sH_c(V,p^*)$ in the definition of $C(V,p^*;\al,a)$ \eqref{EqCap2} for two reasons. First functions in $\sH_c(V,p^*)$ may have $a$ or $b$ in its action spectrum. Second, functions in $\sH_c(V,p^*)$ may not be compactly supported in $V\times \T^n\times \T$. However, functions in $\sH_c(V,p^*)$ can be approximated by that in $\sH^{a,b}_c(V,p^*)$. See the proof of Proposition 4.9.1 of \cite{BPS} for the approximation argument. 
\subsection{Morse-Bott theory in Floer homology}\label{ssctMorseBott}
We need to use Morse-Bott theory to compute Floer homology for Hamiltonians of the form $H(p)$. We first give the definition of Morse-Bott manifolds. 
\begin{Def}\label{DefMorseBott}
We say a subset $P\subset\sP(H;\al)$ is a Morse-Bott manifold of periodic orbits, if the set $C_0:=\{x(0)\ |\ x\in P\}$ is a compact submanifold of the symplectic manifold $M$, and the tangent space $$T_{x_0}C_0=\mathrm{Ker}(D\phi^1_H(x_0)-\id),\quad\forall\ x_0\in C_0,$$ where $\phi^1_H$ is the time-1 map of the Hamiltonian flow of $H(p,q,t)\in \sC^\infty_{cpt}(M\times \T^1,\R).$
\end{Def}
For a compactly supported Hamiltonian system $H(p)$ defined on $(RT^*\T^n,\omega_0)$ and depending only on variables in the fibers, the set $\{\frac{\partial H}{\partial p}(p),\ p\in \R^n\}\times \T^n$ is foliated into invariant tori labeled by frequencies $\{\frac{\partial H}{\partial p}(p),\ p\in \R^n\}$ according to the Liouville-Arnold theorem. A torus corresponding to a frequency $\dot q=\frac{\partial H}{\partial p}(p_0)\in \Z^n\setminus\{0\}$ is an invariant torus foliated by periodic orbits of period 1. If we pick any point $q(0)$ in the torus as initial condition to solve our Hamiltonian equation, the resulting periodic orbit lies completely on the torus. We have the following easy criteria to determine when such a torus is a Morse-Bott manifold. 
\begin{Lm}\label{LmMorseBott}For a Hamiltonian system $H(p)$ defined on $(T^*\T^n,\omega_0)$ and depending only on variables in the fibers, the set \[P=\left\{(p,q)\in T^*\T^n\ \Big|\ p=p_0,\quad \dot q=\frac{\partial H}{\partial p}(p_0)\in \Z^n\setminus \{0\}\right\}\] is a 
Morse-Bott manifold of periodic orbits for $H(p)$ if and only if \[\det\left(\dfrac{\partial^2 H}{\partial p_i\partial p_j}\right)(p_0)\neq 0,\ i,j=1,2,\ldots, n.\]
\end{Lm}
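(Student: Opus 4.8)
The plan is to reduce the statement to a linear-algebra computation of the linearized time-$1$ map along a periodic orbit lying on the torus $P$, and then to apply the definition of Morse--Bott manifold directly. First I would fix $p_0$ with $\frac{\partial H}{\partial p}(p_0)=k\in\Z^n\setminus\{0\}$ and pick any $x\in P$, say $x(t)=(p_0, q_0+tk)$; this is genuinely a $1$-periodic orbit since $k\in\Z^n$ gives $x(1)=x(0)$ in $\T^n$. Since $H$ depends only on $p$, Hamilton's equations are $\dot p=0$, $\dot q=\frac{\partial H}{\partial p}(p)$, so the flow is completely integrable and can be written down explicitly: $\phi^t_H(p,q)=\left(p,\ q+t\frac{\partial H}{\partial p}(p)\right)$. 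Differentiating at the point $x(0)=(p_0,q_0)$ in the natural coordinates $(\delta p,\delta q)$ on $T_{x(0)}(T^*\T^n)$ gives the block-triangular matrix
\begin{equation*}
D\phi^1_H(x(0))=\begin{pmatrix} \id_n & 0\\[1mm] t\,\frac{\partial^2 H}{\partial p_i\partial p_j}(p_0) & \id_n\end{pmatrix}\Bigg|_{t=1}=\begin{pmatrix}\id_n & 0\\[1mm] \mathrm{Hess}_p H(p_0) & \id_n\end{pmatrix}.
\end{equation*}
(One should be slightly careful that the chart on $\T^n$ is valid along the whole orbit, but since the orbit is a straight line one can use the universal cover $\R^n$ and the flow descends, so this is harmless.)

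Next I would compute $\mathrm{Ker}(D\phi^1_H(x(0))-\id)$. From the block form, $D\phi^1_H-\id=\begin{pmatrix}0&0\\ \mathrm{Hess}_p H(p_0)&0\end{pmatrix}$, so a vector $(\delta p,\delta q)$ lies in the kernel precisely when $\mathrm{Hess}_p H(p_0)\,\delta p=0$, with $\delta q$ arbitrary. Hence the kernel always contains the $n$-dimensional "$\delta q$-direction," and it equals exactly $n$ dimensions if and only if $\mathrm{Hess}_p H(p_0)$ is invertible, i.e. $\det\left(\frac{\partial^2 H}{\partial p_i\partial p_j}(p_0)\right)\neq 0$; otherwise it has dimension strictly larger than $n$. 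Meanwhile $C_0=\{x(0)\mid x\in P\}$ is the torus $\{p=p_0\}\times\T^n$... wait — more precisely $C_0$ is the image of $\T^n$ under $q_0\mapsto (p_0,q_0)$, which is an $n$-dimensional compact submanifold (a Lagrangian torus), and its tangent space at any point is exactly the $\delta q$-direction, an $n$-dimensional subspace. Comparing, $T_{x_0}C_0=\mathrm{Ker}(D\phi^1_H(x_0)-\id)$ holds if and only if the kernel is $n$-dimensional, which by the above is equivalent to the nonvanishing of the Hessian determinant. This gives both directions of the "if and only if" and matches Definition \ref{DefMorseBott}.

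The main obstacle is essentially bookkeeping rather than a genuine difficulty: one must verify that $P$ as defined is indeed a submanifold of $\sP(H;\al)$ realizing $C_0$ as stated — in particular that every point of the torus $\{p=p_0\}$ does generate a closed orbit of period exactly $1$ (true since $k\in\Z^n$) and that these exhaust $P$ — and one must be careful about the identification of $T(T^*\T^n)$ with $\R^n\oplus\R^n$ being parallel along the orbit so that the differential of the time-$1$ map is the constant-coefficient matrix above (this uses that the fibers are flat and the base torus is flat, i.e. $T^*\T^n=\T^n\times\R^n$ globally trivially). Once these identifications are pinned down, the computation of $D\phi^1_H$ and of its fixed subspace is immediate because the flow is linear in $q$ and constant in $p$. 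I would also note for completeness that the condition $k\neq 0$ is used only to ensure $P$ consists of nonconstant orbits (it plays no role in the Morse--Bott computation itself), consistent with how the lemma is phrased.
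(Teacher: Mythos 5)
Your proof is correct and follows essentially the same route as the paper: both compute $D\phi^1_H$ explicitly (you by differentiating the explicit flow $\phi^t_H(p,q)=(p,q+t\,\partial_p H(p))$, the paper by integrating the linearized equations — equivalent computations), obtain the same unipotent block matrix, and compare $\mathrm{Ker}(D\phi^1_H-\id)$ with $T_{x_0}P$ to conclude that equality holds iff $\mathrm{Hess}_p H(p_0)$ is invertible. The small side remarks you flag (working on the universal cover, $k\neq 0$ ensuring nonconstant orbits) are harmless and consistent with the paper's implicit treatment.
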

\begin{proof}
The Hamiltonian equations are $\begin{cases} \dot p=0\\ \dot q=\frac{\partial H}{\partial p}(p)\end{cases}.$
The linearized equation has the form 
\[\dfrac{d}{dt}\left[\begin{array}{c}\dt p\\
\dt q\end{array}\right]=\left[\begin{array}{cc}0&0\\
\frac{\partial^2 H}{\partial p^2}&0\end{array}\right]\left[\begin{array}{c}\dt p\\
\dt q\end{array}\right],\quad (\dt p,\dt q)\in T_{(p,q)}(T^*\T^n).\]
This equation can be integrated explicitly, whose fundamental solution at time $1$ is 
\[D\phi^1_H=\exp\left[\begin{array}{cc}0&0\\
\frac{\partial^2 H}{\partial p^2}&0\end{array}\right]=\id_{2n}+\left[\begin{array}{cc}0&0\\
\frac{\partial^2 H}{\partial p^2}&0\end{array}\right].\]
According to Definition \ref{DefMorseBott}, we only need to check
\[T_{(p_0,q_0)}P=\mathrm{Ker}(D\phi^1_H(p_0,q_0)-\id_{2n}),\quad \forall\ (p_0,q_0)\in P.\]
On the one hand, the set $P$ in consideration is an $n$-torus $P=\{(p,q)\ |\ p=p_0,\ q\in \T^n\}$, whose tangent space at $(p,q)$ is  $T_{(p,q)}P=\{(\dt p,\dt q)\ |\ \dt p=0,\ \dt q\in \R^n\}$.  On the other hand, we have \[\mathrm{Ker}(D\phi^1_H(p_0,q_0)-\id_{2n})=\mathrm{Ker}\left[\begin{array}{cc}0&0\\
\frac{\partial^2 H}{\partial p^2}&0\end{array}\right].\]
Hence $P$ is a Morse-Bott manifold if and only if the matrix $\frac{\partial^2 H}{\partial p^2}$ is nondegenerate at $p_0$. 
\end{proof}
Next, we cite a theorem of Pozniak from \cite{BPS} that computes Floer homology using a Morse-Bott manifold. 
\begin{Thm}[Theorem 5.2.2 of \cite{BPS}]\label{ThmMB}
   Consider $a<b,\ a,b\in [-\infty,\infty]$ and a nontrivial homotopy class $\alpha\in \pi_1(M)$, where $M$ is a symplectic manifold, and a Hamiltonian 
   $H$ which does not have $a$ or $b$ in its action spectrum. Suppose that the set
   $P:=\left\{x\in\mathscr P(H;\alpha)\,|\,a<\mathcal A_H(x)<b\right\}$ is a
   connected Morse--Bott manifold of periodic orbits.  Then we have
   $\FH^{[a,b)}(H;\alpha) \cong H_*(P;\Z_2)$.
\end{Thm}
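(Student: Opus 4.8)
The plan is to reduce the computation to finite-dimensional Morse theory on $P$ by a small perturbation, following Pozniak's original argument; the cascade formalism of Frauenfelder or Bourgeois's Morse--Bott package would give an equivalent route.

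\emph{Localization.} Since $a,b\notin\cS(H;\al)$ and $P$ is compact, the generators of $\FC^{[a,b)}(H;\al)$ all lie in $P$, and $\cA_H$ is locally constant on its critical set, so all orbits of the connected $P$ share a single action value $c_0\in(a,b)$. I would first fix an isolating neighborhood $\cN$ of $C_0=\{x(0)\ |\ x\in P\}$ containing no other $1$-periodic orbit with action in a slightly enlarged window $(a-\eps,b+\eps)$. Floer trajectories between the relevant generators have energy at most $b-a$, and by the compactness of Section \ref{sssctCpt} together with the exactness of $\om_0$, trajectories of small energy are confined to $\cN$; hence the whole computation takes place near $P$.

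\emph{Perturbation to a Morse situation.} Fix a Morse function $f$ on the compact manifold $C_0$ and set $H_\dt=H+\dt\,\rho\cdot(f\circ\mathrm{pr})$, where $\rho$ is a cutoff supported in $\cN$ and $\mathrm{pr}$ is a local retraction onto $C_0$ along the normal directions. For $\dt$ small I expect: (i) $a,b\notin\cS(H_\dt;\al)$ and the perturbed orbits still have action in $(a,b)$; (ii) the $1$-periodic orbits of $H_\dt$ with action in $[a,b)$ are nondegenerate and in canonical bijection with $\mathrm{Crit}(f)$ — normal nondegeneracy is exactly the Morse--Bott condition of Definition \ref{DefMorseBott} (cf. the Hessian computation in Lemma \ref{LmMorseBott}), while $f$ breaks the tangential degeneracy; (iii) the Conley--Zehnder index of the orbit $x_q$ through $q\in\mathrm{Crit}(f)$ equals $\mathrm{ind}_f(q)$ up to an overall constant attached to $P$, which only shifts the grading. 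Since $\FH^{[a,b)}$ is by construction computed through small generic perturbations of the given Hamiltonian, it suffices to compute $\FH^{[a,b)}(H_\dt;\al)$ for small $\dt$.

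\emph{Identification of the differentials and conclusion.} The heart of the matter is to choose the perturbation data ($\dt$ small and an almost complex structure adapted to $P$ in Pozniak's sense) so that for $\dt$ small the index-$1$ moduli spaces $\mathscr M^1(x_{q^-},x_{q^+};H_\dt,J_0;\al)/\R$ are in bijection with the index-$1$ spaces of negative gradient flow lines of $f$ on $C_0$. Finite energy forces the cylinders into $\cN$; exactness of $\om_0$ rules out bubbling and forces any limiting $J_0$-holomorphic component to be constant; and an adiabatic-limit / implicit-function-theorem gluing argument in the adapted charts converts each short Floer cylinder into a Morse trajectory on $C_0$ and, conversely, glues each Morse trajectory to a unique Floer cylinder. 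Hence $\FC^{[a,b)}(H_\dt;\al)$ is isomorphic as a $\Z_2$-complex to the Morse complex of $(C_0,f)$, giving $\FH^{[a,b)}(H;\al)\cong\FH^{[a,b)}(H_\dt;\al)\cong H_*^{\mathrm{Morse}}(C_0,f;\Z_2)\cong H_*(P;\Z_2)$. I expect the main obstacle to be precisely this last comparison: the analytic estimates identifying small-energy Floer cylinders near the degenerate critical manifold with gradient lines of $f$, carried out so as not to disturb the $[a,b)$ action filtration and within the class of admissible perturbations $\mathscr U_{reg}$. Localization, the index bookkeeping, and the identification $H_*^{\mathrm{Morse}}(C_0,f;\Z_2)\cong H_*(P;\Z_2)$ are routine.
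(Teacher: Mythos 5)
The paper does not prove this theorem: it is quoted verbatim as Theorem 5.2.2 of \cite{BPS}, which in turn is Pozniak's Morse--Bott computation, and the present paper uses it as a black box. So there is no in-paper proof to compare against, and anything you write here is reconstructing the cited reference rather than the author's argument.

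That said, your outline is a faithful sketch of the standard Pozniak strategy (localization near $P$, Morse perturbation along $C_0$, adiabatic identification of short Floer cylinders with gradient lines of $f$), and you correctly identify the adiabatic gluing step as where the real analysis lives. Two points worth tightening if you were to flesh this out. First, for a degenerate $H$ the group $\FH^{[a,b)}(H;\al)$ is itself only \emph{defined} through nondegenerate perturbations, so the step ``it suffices to compute $\FH^{[a,b)}(H_\dt;\al)$'' is not a comparison between two independently defined objects; rather one must show the filtered Floer homology stabilizes for $\dt$ small (this is where an action-window continuation argument in the spirit of Proposition \ref{PropHomotopy} enters, to guarantee the perturbed orbits' actions stay within $[a,b)$ and that different small $\dt$ give canonically isomorphic groups). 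Second, your confinement argument says ``trajectories of small energy are confined to $\cN$,'' but the bound you quote is $b-a$, which need not be small; the correct observation is that all generators sit at the single action value $c_0$, so before perturbation the relevant trajectories have energy $0$, and after perturbation their energy is $O(\dt)$, which is what forces them into the isolating neighborhood. With these two clarifications the outline matches Pozniak's proof; the remaining work is the Fredholm/gluing analysis you already flag as the main obstacle.
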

\section{Construction of the profile functions}
In this section, we construct the key ingredient that we call the profile functions $H_s(p),\ s\in \R,$ needed in the proof of Theorem \ref{ThmCorner}. The family of profile functions is both upward and downward exhausting, and for $a$ satisfying $0\leq a\leq c-\langle p^*,\al\rangle$, all the homology groups $\FH^{[a,\infty)}(H_s,\al)$ are isomorphic to each other and nonvanishing as $s$ varies in $\R$. 

We first define an open sector region in $\R^n$ for some large $R$ as the set $V$ $$\mathcal C_{\id,R}:=\{p\in \R^n\ |\ \Vert p\Vert<R,\quad p_i>0,\ \forall\ i=1,2,\ldots,n\}.$$
For a given non degenerate matrix $\mathbf A\in$GL$(n,\R)$, we define 
$$\mathcal C_{\mathbf A,R}=\{p\in \R^n\ |\ \mathbf A^{-1}p \in \mathcal C_{\id,R}\},$$
which is a subset (containing a neighborhood of the tip) of the cone $\cC$ determined by the matrix $\mathbf A$.
We next define for a given point $p^*\in \cC_{\mathbf A,R}$ 
\begin{equation}
\begin{aligned}\label{EqHcHat}
&\widehat{\sH}_{c}(\cC_{\mathbf A,R},p^*):=\{H\in\sC^{\infty}_{cpt}(\cC_{\mathbf A,R}\times \T^n\times \T^1,\R)\ |\ H(p^*,q,t)>c\}.
\end{aligned}
\end{equation}We have the following list of requirements for the family of profile functions $H_s$.
\begin{itemize}
\item At the $\mathscr C^0$ level: $H_s(p)$ is both upward and downward exhausting in\\ $\widehat{\sH}_{c}(\cC_{\mathbf A,R},p^*)$, i.e. for each $H\in \widehat{\sH}_{c}(\cC_{\mathbf A,R},p^*)$
 there exist $s'<s$ such that $H_{s'}<H<H_s$. Functions in \eqref{EqHamV} can be approximated by functions in $\widehat{\sH}_{c}(\cC_{\mathbf A,R},p^*)$ defined here.  
\item At the $\mathscr C^1$ level: there exists a unique $p_s$ such that $DH_s(p_s)=\al$, the homology class in Theorem \ref{ThmCorner}, and the action of the corresponding periodic orbit is greater than $a\in [0, c-\langle p^*,\al\rangle]$. 
\item At the $\mathscr C^2$ level: $\det D^2 H_s(p_s)\neq 0,\quad \forall\ s,$ so that Lemma \ref{LmMorseBott} is satisfied. 
\item Monotonicity: $\partial_s H_s\geq 0.$
\end{itemize}
\subsection{A model function in the one dimensional case}
\subsubsection{Construction of a $\sC^1$ model function with parameter $s=1$}\label{SSSModel}
Consider the function $e^{-\frac{|x|^2}{2\dt}}$ where $\dt$ is sufficiently small. The second order derivative vanishes at the point $x=\pm\sqrt{\dt}$ (the turning points) where the first order derivative is $\mp\frac{1}{\sqrt{\dt}}e^{-\frac{1}{2}}$ and the value of the function is $e^{-1/2}\simeq 0.61>1/2$. 
\begin{figure}
        \centering
    \begin{subfigure}[b]{0.45\textwidth}
                \includegraphics[width=\textwidth]{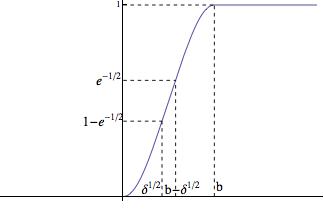}
               \caption{The model \\
                function
                 $u$\\
                 }
               \label{fig:profn1}
       \end{subfigure}%
             \begin{subfigure}[b]{0.55\textwidth}
                \includegraphics[width=\textwidth]{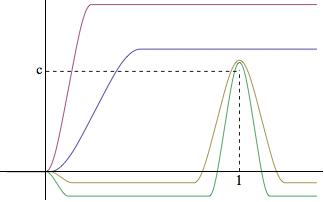}
                \caption{The profile functions seen from a section: $F_s(t\mathbf 1)$ as a function of $t$}
                \label{fig:profn2}
       \end{subfigure}
       \caption{Profile functions}\label{fig:profn}
\end{figure}

We define a $\mathscr C^1$ function $\hat u$ as follows. We consider one copy of $e^{-\frac{|x|^2}{2\dt}}$ and one copy $-e^{-\frac{|x|^2}{2\dt}}+1$. After suitably shifting horizontally the first function we use a piece of straight line of slope $\frac{1}{\sqrt{\dt}}e^{-\frac{1}{2}}$ to join their turning points smoothly. The explicit expression of this function is given as follows (see (A) of Figure 2).
\begin{equation}\label{EqGauss}
\hat u(x)=\begin{cases}
1,&\mathrm{\ if\ }x\geq  (4-e^{1/2})\sqrt{\dt}:=b,\\
e^{-\frac{|x-b|^2}{2\dt}},&\mathrm{\ if\ }x\in [b-\sqrt{\dt},b),\\
\frac{e^{-1/2}}{\sqrt{\dt}}x+1-2e^{-1/2},&\mathrm{\ if\ }x\in [\sqrt{\dt},b-\sqrt{\dt}),\\
-e^{-\frac{|x|^2}{2\dt}}+1,&\mathrm{\ if\ }x\in [0,\sqrt{\dt}),\\
0,&\mathrm{\ if\ }x< 0.
\end{cases}
\end{equation}
The function $\hat u$ is $\sC^\infty$ everywhere except that $\hat u'''$ is discontinuous at the two turning
 points $\{b- \sqrt\dt, \sqrt\dt\}$ and $\hat u''$ is discontinuous at the two points $\{0, b\}$. 

In the following, we use the notation $B(x, r)$ to denote an open ball centered at $x$ with radius $r$.
\subsubsection{The smoothing}\label{SSSModelSmooth}
We next smoothen the function $\hat u$ by convoluting with a $\sC^\infty$,
nonnegative and compactly supported approximating Dirac-$\boldsymbol\dt$ function denoted by
$\phi_\epsilon$ with supp$\phi_\epsilon(x) \subset (-\epsilon, \epsilon)$ and $\epsilon\ll\dt$,
$$\hat u_\epsilon(x)=\int_{-\infty}^\infty \hat u(t)\phi_\epsilon(x-t)\,dt=\int_{-\infty}^\infty \hat u(x-t)\phi_\epsilon(t)\,dt.$$
The function $\hat u_\epsilon$ has the following properties by taking derivatives in the above expression.
{\it\begin{itemize}
\item $\hat u_\epsilon'\geq 0.$
\item $\hat u_\epsilon''(x)\geq 0$ for $x< b/2$ and $\hat u_\epsilon''(x)\leq 0$ for $x> b/2$.
\end{itemize}}
{\it   Assuming further that $\phi_\epsilon$ is an even function, and $\phi_\epsilon'(x) \geq 0$ for $x < 0$, we get the following

 \ \qquad    $\bullet$ For any given $C > 0$, there is a $\dt_*$ such that for $\epsilon\ll\dt < \dt_*$, in the
interval $x - b \in (−C\dt, 0)$, we have $\hat u''_\epsilon (x) \leq -\dt^{-1}/3$, and in $B(b, \epsilon)$ we have that
  $\hat u_\epsilon''(x)$ is nondecreasing if $\hat u''_\epsilon(x) \geq -\dt^{-1} /3$.}
  
  The third bullet point is less obvious. It will be used to study the concavity property of the profile functions (the third bullet point in Lemma \ref{LmLocation}). Here we give a proof. 
\begin{proof}[Proof of the third bullet point]
We denote $s=x-b$ and compute
$$\hat u''(x) =\begin{cases}
\left(-\frac{1}{\dt}+\frac{s^2}{\dt^2}\right)e^{-s^2/(2\dt)}&s<0\\
0& s>0
\end{cases},\quad \hat u'''(x)=\begin{cases}\frac{s}{\dt}\left(\frac{3}{\dt}-\frac{s^2}{\dt^2}\right)e^{-s^2/(2\dt)}&s<0,\\
\frac{1}{\dt}\boldsymbol\dt(s)&s=0\\
0&s>0
\end{cases}.$$
Since $\hat u''(x)$ is close to $-\dt^{-1}$ for $x-b\in (-C\dt,0)$ and jumps from $-\dt^{-1}$ to $0$ at $b$, and $\hat u_\epsilon''$ is a weighted average of $\hat u''|_{x<0}$ and $0$, we get that $\hat u_\epsilon''(x)<-\dt^{-1}/3$ for $x-b\in (-\dt,-\epsilon)$. To prove the statement, it is enought to show as $\epsilon\ll\dt\to0,$
\begin{equation}\label{Equ3}
\hat u'''_\epsilon(x)=\begin{cases}
\frac{1}{\dt}\phi_\epsilon(s)+O\left(\frac{\epsilon}{\dt^2}\right),& s\in B(0,\epsilon),\\
\left(\frac{1}{\dt}+O\left(\frac{\epsilon}{\dt^2}\right)\right)\phi_\epsilon(s)\geq 0,& s>0,
\end{cases}
\end{equation}
hence $$\hat u_\epsilon''(x)=\int_{b+\epsilon}^x \hat u'''_\epsilon(t)\,dt<-\dt^{-1}/3,\quad \mathrm{for\ }-\epsilon<x-b<0.$$
The first estimate in \eqref{Equ3} follows directly from 
$$\hat u'''_\epsilon(x)=\frac{1}{\dt}\phi_\epsilon(s)+\int_{-\infty}^{0-}\hat u'''(s)\phi_\epsilon(s-t)\,dt,$$
and the second follows from 
$$\left|\int_{-\infty}^{0-}\hat u'''(s)\phi_\epsilon(s-t)\,dt\right|\leq \frac{3\epsilon}{\dt^2}\int_{-\infty}^0\phi_\epsilon(t-s)\,dt\leq \frac{3\epsilon}{\dt^2}\phi_\epsilon(-s)=\frac{3\epsilon}{\dt^2}\phi_\epsilon(s)$$
for $s>0.$
\end{proof}
\subsubsection{Construction of model functions defined on $\R^n$} We define 
$$u(x)=\hat u_\epsilon(x-3\epsilon)$$
so that $u(x)=0$ for $x<\epsilon$ and $u(x)=1$ for $x>b+5\epsilon$. We next define $d=b+6\epsilon$ and $d_s=d/|s|$ for $s\geq 1$. We next introduce 
$$u_s(x)=u(|s|x),\quad v_s(x)=u_s(x+d_s),\quad w_s(x)=u_s((1-d_s)-|x|).$$
Notice that $v_s(x)=1$ for $|s|x\geq -\epsilon$ and $w_s(x)=0$ for $|x|\geq 1-d_s$ and $w_s(x)=1$ for $|x|\leq 1-2d_s$.

We next introduce for $y\in \R^n$ and $|s|\geq 1$
\begin{equation}\label{EqUVW}
U_s(y)=U_1(|s|y)=\prod_{i=1}^n u_s(y_i),\quad V_s(y)=\prod_{i=1}^n v_s(-|y_i|), \quad W_s(y)=w_s(\|y\|/R).
\end{equation}
We also define $W_s=W_1$ for $|s|<1$. The function $W_s$ is rotationally invariant and has nonpositive radial derivatives. 

By definition, we have
\begin{equation}\label{EqU=V}
U_s(y+d_s\mathbf 1)=V_s(y),\quad \mathrm{\ for\ } y_i\leq 0, \ i=1,2\ldots,n,
\end{equation}
where $\mathbf 1:=(1,\ldots,1)\in \R^n$. 
\subsection{Profile functions when the cone $\cC$ is the first quadrant}
In the following we first define the profile functions adapted to the cone $\cC$ defined by $A=\mathrm{id}$ and $p^*=\mathbf 1$. The cone is now the interior of the first quadrant of $\R^n.$
\subsubsection{Profile functions when $s\geq 1$} For $s\geq 1$, we define our profile function 
$$F_s(y)=(c+s)U_s(y).$$
See the upper two curves in Figure 2(B).
\subsubsection{Profile functions when $s\leq 1$}
We define for $s\leq -1$,
$$F_s(y)=((c+|s|)V_s(y-\mathbf 1)-|s|+1/|s|)U_s(y).$$
See the lower two curves in Figure 2(B). The factor $U_s(y)$ is multiplied to guarantee that $F_s(y)=0$ for $y$ close to the boundary of $\cC$.
\subsubsection{Homotopy from $s=1$ to $s=-1$} 
To match the two pieces $s \geq 1$ and $s \leq -1$, we use a homotopic procedure. We first translate the graph of $F_1$ horizontally to match the
graphs of $F_0$ and $F_{-1}$ in the region $y_i \leq 1$, for all $i$. Then we use a linear homotopy
from $F_0$ to $F_{-1}$. Explicitly, the construction is given as follows.
First as $s$ goes from $1$ to $0$, we make a horizontal translation
$$F_s(y)=F_1(y-(1-s)(1-d)\mathbf 1),\quad s\in [0,1).$$
Using \eqref{EqU=V}, we see that $F_{-1}=F_0$ in the region $y_i\leq 1$ for all $i=1,\ldots,n$. Next from $s=0$ to $s=-1$, we use
$$F_s(y)=-sF_{-1}(y)+(1+s)F_0(y),\quad s\in [-1,0).$$
\subsection{ Profile functions adapted to the cone $\cC$} In this section, we build profile
functions adapted to the cone $\cC$ defined by a non degenerate matrix $A$ with $p^*$ in
the interior of $\cC$. The cone $\cC$ is invariant if we multiply each column vector of $A$
by a positive number. We fix these positive numbers using $p^*$ as follows. We first get
a vector $y^* = A^{-1} p^*$ that lies in the interior of the first quadrant since $p^*$ lies in
the interior of $\cC$. We next introduce the matrix $\mathbf A = AY^*$ where $Y^*$ is the diagonal
matrix whose diagonal entries form the vector $y^*$. Now we get $\mathbf A^{-1}p^* =\mathbf 1$. We
denote $y = \mathbf A^{-1} p$ and introduce
                    $$\tilde H_s (p) = F_s (\mathbf A^{-1} p)W_s (\mathbf A^{-1} p),$$ 
where we multiply by the function $W_s$ to make $\tilde H_s (p)$ compactly supported. The family $\tilde H_s$ is constructed to be nondecreasing in $s$, i.e. $\partial_s \tilde H_s \geq 0$, which can be verified by differentiating $\tilde H_s$ directly in each interval of parameter $s$. Notice that at the points $s^* \in \{-1, 0, 1\}$, the family $\tilde H_s$ is not smooth in $s$. We use another smoothing
procedure that is localized in an $\eps$ neighborhood $B(s^*,\eps)$ for some small $\eps\ll \dt$ to
get a family $H_s$ that is $\sC^\infty$ in $s$ with $\partial_s H_s \geq 0$.

We introduce a partition of unity such that one function in the partition of unity
denoted by $\rho_0 (s) \in \sC^\infty$ satisfies $B(s^* , \eps/2)\subset $supp$\rho_0 \subset B(s^* , \eps)$. The smoothing is done by replacing $\rho_0 \partial_s \tilde H_s \geq 0$ by $(\rho_0 \partial_s \tilde H_s )* \phi_\epsilon \geq 0$ where $\phi_\epsilon$ is as before and $\epsilon$ is chosen so small that 
supp$_s(\rho_0\partial_s \tilde H_s)*\phi_\epsilon\subset B(s^* , \eps)$. We denote the resulting function by $H_s$ after integrating the smoothed $\partial_s \tilde H_s$. Notice 
$$\int_{s^*-\eps}^{s^*+\eps}\rho_0(s)\partial_s\tilde H_s\,ds=\int_{s^*-\eps}^{s^*+\eps}\int_{s^*-\eps}^{s^*+\eps}\rho_0(s)\partial_s\tilde H_s\phi_\epsilon(t-s)\,ds\,dt.$$
This implies $H_s = \tilde H_s$ for $s \notin B(s^*,\eps)$.

It is easy to verify that $H_s$ is an exhausting sequence for Hamiltonians in $\widehat{\sH_c} (\cC_{\mathbf A,R}, p^*).$ Namely, for all $H\in \widehat{\sH_c} (\cC_{\mathbf A,R}, p^*),$ there exist $s>s'$ such that $H_{s'}<H<H_s$. It is enough to consider the case $\mathbf A = A = \mathrm{id}$ and $p^* = \mathbf 1$.

\subsection{Location of Morse-Bott manifolds}
In this section, we find the $p_{s}$ satisfying $\frac{\partial H_s}{\partial p}(p_s)=\al$, where $\al$ satisfies the assumption of Theorem \ref{ThmCorner}. 
\begin{Lm}\label{LmLocation}
 Consider $\al \in H_1 (\T^n , \Z)\setminus \{0\}$ as in Theorem \ref{ThmCorner}. Fix a sufficiently
small $\dt > 0$. For each $s \in \R$, there is a unique $p_s^+ \in \cC$ satisfying the following:
   \begin{itemize}
\item $DH_s (p^+_s) = \al$;
\item $D^2 H_s (p_s^+ )$ is negative definite;
\item $H_s (p_s^+ ) -\langle p^+_s , \al\rangle > c - \langle p^* , \al\rangle$.
\end{itemize}
Moreover, any other solution of $H_s(p)=\al$, denoted by $p^-_s$, must satisfy $H_s (p^-_s) -\langle p^-_s ,\al\rangle < 0$.
          \end{Lm}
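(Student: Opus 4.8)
The plan is to work through the three regimes of the parameter $s$ separately, since the profile functions $H_s$ are defined by different formulas for $s\ge 1$, $s\le -1$, and the interpolating range $s\in[-1,1]$, and then to observe that the smoothing near $s^*\in\{-1,0,1\}$ does not destroy the conclusions because it only averages a family that already has the required properties. Throughout I reduce, as the paper already notes, to the model case $\mathbf A=A=\mathrm{id}$ and $p^*=\mathbf 1$, so that $H_s(p)=F_s(p)W_s(p)$ with $F_s$ a product of the one-dimensional model functions $u_s$; the general cone case follows by the linear change of coordinates $y=\mathbf A^{-1}p$, under which $DH_s$, $D^2H_s$, and the pairing $\langle p,\al\rangle$ transform in the obvious way (in particular $\al\in\cC^*$ translates precisely into $(\mathbf A^T\al)_i>0$ for all $i$, which is what we will need for the one-dimensional factors to each contribute a solution). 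On the support of $F_s$, away from the cutoff region, $W_s\equiv 1$, so I first solve $DF_s(p)=\al$ and only afterwards check that the solution lies in the region $W_s\equiv 1$ (this is where the "decays slowly / $R$ large" choices enter).

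First, the case $s\ge 1$: here $F_s(y)=(c+s)\prod_i u_s(y_i)$, so $\partial_{y_j}F_s=(c+s)u_s'(y_j)\prod_{i\ne j}u_s(y_i)$, and the equation $DF_s(y)=\al$ decouples coordinate-by-coordinate once one takes logarithmic derivatives. The one-dimensional building block $u_s$ is, by construction in \ref{SSSModel}–\ref{SSSModelSmooth}, increasing, equal to a Gaussian bump near its upper turning point, with $u_s''$ changing sign exactly once; this gives on the concave branch a \emph{unique} point where $u_s'$ takes any prescribed positive value below its maximum, and there $u_s''<0$ strictly (the third bullet of \ref{SSSModelSmooth} quantifies $u_s''\le -\delta^{-1}/3$ on the relevant interval). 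So there is a unique $p_s^+$ with $DH_s(p_s^+)=\al$ lying on the concave part of every factor; the Hessian $D^2H_s$ at that point is, up to conjugation by a positive diagonal matrix, the diagonal matrix with entries $u_s''(\,\cdot\,)/u_s(\,\cdot\,)<0$ plus lower-order cross terms that are controlled because $u_s'$ is small there — hence negative definite for $\delta$ small. The convex branch of each factor (small $y_i$) produces the other solutions $p_s^-$, where at least one diagonal Hessian entry is positive, so $D^2H_s$ is not negative definite there. The action estimate $H_s(p_s^+)-\langle p_s^+,\al\rangle>c-\langle p^*,\al\rangle$ is the Legendre-transform inequality: since $H_s(p_s^+)\ge c+s-$(small) while $\langle p_s^+,\al\rangle$ stays bounded as $p_s^+$ is near the turning points $b\sqrt\delta/|s|\cdot\mathbf 1$ which shrink with $|s|$, for $s$ large this is immediate, and the claim for all $s\ge1$ follows because $s\mapsto H_s(p_s^+)-\langle p_s^+,\al\rangle$ is monotone (using $\partial_sH_s\ge0$ and the envelope theorem, $DH_s(p_s^+)=\al$ kills the variation in $p_s^+$). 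For $p_s^-$ on the convex branch one instead has $H_s(p_s^-)$ small and $\langle p_s^-,\al\rangle>0$, giving the reverse strict inequality.

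The case $s\le -1$ is the same computation with $F_s(y)=((c+|s|)V_s(y-\mathbf 1)-|s|+1/|s|)U_s(y)$: the factor $V_s$ is $\equiv 1$ near the turning points of $U_s$ by \eqref{EqU=V}, so on the region where the relevant solution sits, $F_s$ again reduces to a constant multiple of $\prod u_s(y_i)$ and the analysis is identical; the additive constant $-|s|+1/|s|$ only shifts the value, preserving the concavity and the action inequality (here one uses $\al\in\cC^*$ and $\langle p^*,\al\rangle\le c$ to see the sign). For the interpolating range $s\in[-1,1]$, before smoothing $H_s$ is either a horizontal translate of $F_1$ (for $s\in[0,1)$) — under which all three bulleted properties are translation-covariant, with the action picking up a harmless $\langle(1-s)(1-d)\mathbf 1,\al\rangle$ that one checks stays within budget — or a convex combination $-sF_{-1}+(1+s)F_0$ (for $s\in[-1,0)$); for the latter one needs that $F_{-1}$ and $F_0$ have their concave turning regions in the \emph{same} location (which is arranged precisely by the matching $F_{-1}=F_0$ on $\{y_i\le1\}$), so the convex combination is still a product-type function with a single concave solution, negative-definite Hessian, and the action estimate by convexity of the bound. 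Finally, the smoothing $\rho_0\partial_s\tilde H_s\mapsto(\rho_0\partial_s\tilde H_s)*\phi_\epsilon$ near $s^*\in\{-1,0,1\}$ replaces $H_s$ on $B(s^*,\epsilon)$ by a weighted average over nearby parameters of functions each satisfying the three properties with uniform constants; since "$DH_s(p_s^+)=\al$ with $D^2H_s(p_s^+)\prec 0$" is an open condition stable under small $\sC^2$ perturbations and the action bound is strict, the averaged family still satisfies everything for $\epsilon\ll\delta$.

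The main obstacle I anticipate is not any single one of these cases but the bookkeeping that makes the action inequality $H_s(p_s^+)-\langle p_s^+,\al\rangle>c-\langle p^*,\al\rangle$ hold \emph{uniformly in $s$}, including across the non-smooth junctions and through the translation in the range $s\in[0,1)$ — this is exactly where the constraints \eqref{Eqal} ($\langle p^*,\al\rangle\le c$ and $\al\in\cC^*$) get used, and where one must track that the turning points $p_s^+$ stay inside the region $\{W_s\equiv1\}$ (so that $H_s=F_s$ there and the cutoff contributes nothing to $DH_s$), which in turn forces the choices $R$ large and $\delta$ small made earlier. The second bullet (negative definiteness of $D^2H_s(p_s^+)$) is where the quantitative estimate $u_s''\le -\delta^{-1}/3$ from the third bullet of Section \ref{SSSModelSmooth} is essential: it must dominate the off-diagonal terms of the Hessian of the product, which are of size $O(|u_s'|^2/u_s)=O(\delta^{-1})$ but with a small constant when $\delta$ is small, so a careful but routine comparison finishes it.
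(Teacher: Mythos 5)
Your proposal reorganizes the argument by parameter regime ($s\ge 1$, $s\le-1$, $s\in[-1,1]$, then smoothing), whereas the paper organizes by the property being proved (existence, Hessian, action bound for $p_s^+$, action bound for $p_s^-$, cut-off). That reorganization is legitimate in principle, but two of your central claims do not hold up as stated.

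First, the asserted ``decoupling coordinate-by-coordinate once one takes logarithmic derivatives'' for existence and uniqueness of $p_s^+$ is not correct. The equation $\partial_{y_j}F_s=(c+s)\,u_s'(y_j)\prod_{i\ne j}u_s(y_i)=\al_j$ can be rewritten as $u_s'(y_j)/u_s(y_j)=\al_j/F_s(y)$, and the right-hand side still involves the unknown value $F_s(y)$, which couples all the coordinates through the constraint $\prod_i u_s(y_i)=F_s(y)/(c+s)$. The system is genuinely coupled, and showing that $DF_s$ is a bijection from the concave region onto a cone large enough to contain $\beta=\mathbf A^T\al$ requires an argument. The paper does this by exploiting the exact Gaussian form $\hat U(y)=\exp(-\|y-b\mathbf 1\|^2/(2\dt))$, whose level sets are spheres centered at $b\mathbf 1$ and whose gradient $D\hat U=-\dt^{-1}C(y-b\mathbf 1)$ on the $C$-level set is radial with modulus $C\sqrt{-2\ln C/\dt}$ ranging monotonically over $(0,(e\dt)^{-1/2})$; this gives a clean polar-coordinate bijection $D\hat U:\mathcal D(b,\dt)\to\cC_{\id,(e\dt)^{-1/2}}$. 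Without something like this (or an implicit-function argument that solves the coupled system in $(C,y)$ jointly), uniqueness is not established.

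Second, the action estimate $H_s(p_s^+)-\langle p_s^+,\al\rangle>c-\langle p^*,\al\rangle$ is where your argument actually fails directionally. Monotonicity $\partial_sH_s\ge 0$ together with the envelope identity $\frac{d}{ds}\bigl(H_s(p_s^+)-\langle p_s^+,\al\rangle\bigr)=\partial_sH_s(p_s^+)\ge 0$ shows the action is \emph{nondecreasing} in $s$. This propagates a verified lower bound \emph{upward} in $s$; verifying it ``for $s$ large'' (as you do) gives nothing for $s=1$, and certainly nothing for $s\to-\infty$, which is precisely where the estimate is tightest (the profile maximum decays to $c$ as $s\to-\infty$). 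The paper instead proves the bound \emph{uniformly in $s$} by comparing $p_s^+$ to the critical point $\hat p_s$ of $F_s$ nearby (where $DF_s=0$, $U_s(\hat y_s)=1$, $H_s(\hat p_s)=H_s(p^*)>c$ and $\langle\hat p_s,\al\rangle<\langle p^*,\al\rangle$ since $\hat y_{s,i}<1$ and $\beta_i>0$), and then a Taylor/concavity argument along the segment from $p_s^+$ to $\hat p_s$ gives $H_s(p_s^+)-\langle p_s^+,\al\rangle>H_s(\hat p_s)-\langle\hat p_s,\al\rangle$. This is a genuinely different and self-contained argument that you would need, or else you must verify the $s\to-\infty$ limit and go upward.

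Two lesser but real gaps: your $p_s^-$ inequality is asserted but not argued (the paper proves $u_s(y_j)<y_j u_s'(y_j)$ by convexity of $u_s$ on the lower branch, then computes $F_s(y_s^-)-y_j\beta_j=F_s\frac{u_s(y_j)-y_ju_s'(y_j)}{u_s(y_j)}<0$, and separately tracks possible new solutions as $s$ decreases through the translation/homotopy range); and the cut-off verification in your last sentence (``this is where the decays slowly / $R$ large choices enter'') is not an argument -- the paper devotes Step 5 to showing that on $\|y\|/R\simeq 1$ some $j$ has $\partial H_s/\partial y_j\le 0$, which contradicts $\beta_j>0$. Your Hessian discussion (diagonal dominance via $u_s''\le-\dt^{-1}/3$) is on the right track and matches the paper's Step 2.
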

\begin{proof} We first forget about the cut-off $W_s$ and will study it close to the end of
the proof. We also forget about the smoothing with respect to $s$. The $s$-smoothing
occurs only in a bounded interval $s \in [-2, 2]$. By choosing $\epsilon$ small enough, $H_s$
approximates the non smoothed $\tilde H_s$ as good as we wish in the $\sC^2$ norm in $p$.

{\bf Step 1, existence and uniqueness of $p^+_s$.}

We consider the function $\hat U (y) :=\prod_{i=1}^n\hat u(y_i)=\exp\left(-\frac{\|y-b\mathbf 1\|^2}{2\dt}\right)$ in the region
$$\mathcal D(b, \dt) := \{\|y-b\mathbf 1\| < \dt^{1/2},\ y_i < b,\ i = 1,2,\ldots,n\}.$$
{\bf Claim}: {\it In the region $\mathcal D(b, \dt)$, the map $D\hat U(y)$ is one-to-one and its image covers
$\mathcal C_{\id,r}$ with $r = (e\dt)^{-1/2}$.}
\begin{proof}[Proof of the claim] Consider the level set $\{\hat U (y) = C\}$ where $C \in (e^{-1/2} , 1)$. We get a sphere $\|y-b\mathbf 1\|^2=2\dt(-\ln C)$, whose radius ranges from $0$ to $\sqrt\dt$. Next consider 
\begin{equation}\label{EqDU}
\frac{\partial \hat U}{\partial y_i}=\hat U(y)\cdot(\ln \hat u(y_i))'=-\frac{y_i-b}{\dt}C
\end{equation}
evaluated on each $C$-level set. When $y$ moves on the sphere, the unit vector $\frac{b\mathbf 1-y}{\|y-b\mathbf 1\|}$ attains any vector of the portion of $\mathbb S^{n-1}$ lying in the interior of the first quadrant
since $y_i < b$, $\forall i$. Moreover the modulus
                               $$\|D\hat U\|=C\frac{\|b\mathbf 1-y\|}{\dt}=C\sqrt{\frac{2(-\ln C)}{\dt}}$$ 
ranges from $0$ to $(e\dt)^{-1/2}$ for $C \in (e^{-1/2}, 1)$ and is monotone in $C$ since we have $$(C\sqrt{-\ln C})'=\frac{-2\ln C-1}{2\sqrt{-\ln C}}<0.$$
\end{proof}
The function $U_1 (y)$ in the definition of $F_1$ approximates $\hat U (y)$ in the $\sC^1$ norm if we let $\epsilon\to 0$ in the smoothing. For given $\al$ in the statement of Theorem \ref{ThmCorner}, we choose
some number $r'\gg \al$ independent of the sufficiently small $\dt$. We will show in Step 2 that $U_1 (y)$ is strictly concave in the region $(DU_1 )^{-1} (\cC_{\id,r'} )$. Therefore we get that $DU_1 (y)$ covers $\cC_{\id,r'}$ and is one-to-one in the region $(DU_1 )^{-1} (\cC_{\id,r'})$. The Hamiltonian
equation gives $\dot q = \frac{\partial H_s}{\partial p} = \frac{\partial F_s}{\partial y}  \mathbf A^{-1} =\al$ for $\al \in H_1 (\T^n , \Z)$ in Theorem \ref{ThmCorner}. We get $\beta:=\mathbf A^T\al=\frac{\partial F_s}{\partial y}$ 
lies in the interior of the first quadrant since $\al \in \cC^*$. Notice
that $U_s (y) = U_1 (|s|y)$ for $s \geq 1$, and we have $\frac{\|\beta\|}{|s|(c+|s|)}\ll r'$, hence we can always find
a unique preimage $y_s^+$ of $\beta$ under the map $DF_s (y) : \R^n \to \R^n$ for $y$ in the region $\cD(d_s , \dt_s)$ where $\dt_s = \dt/s^2$, hence a unique $p_s^+ = \mathbf Ay_s^+$.

The same argument applied to the $s < -1$ case shows that a solution can be found
in the region $\mathcal D(1, \dt_s)$. Moreover, $F_s = F_1$,\ $s\in [-1, 0]$ in the same region since
$U_1 (y + d\mathbf 1) = V_1 (y)$ for $y_i \leq 0$ for all $i = 1, 2,\ldots, n$. The cases $s \in [0, 1]$ are only a
translation of the $s = 1$ case. In this way, we find a solution $p^+_s$ for all $s\in \R$.

{\bf Step 2, the Hessian estimate.}

The solution $y_s^+$ that we found in Step 1 lies in a region where $F_s$ is the product of $u_s(y_i)$ up to translations and rescalings and $F_s (y_s^+)$ is close to $\max F_s$. Here we show that $U_1 (y)$ is strictly concave in the region $(DU_1 )^{-1} (\cC_{\id,r'})$ for some large $r'$
independent of $\dt \to 0$. We have the derivatives calculation
$$
\frac{\partial U_1}{\partial y_i}(y)=U_1\frac{u'(y_i)}{u(y_i)},\quad \frac{\partial^2 U_1}{\partial y_i\partial y_j}(y)=U_1(y)\cdot\begin{cases}
\dfrac{u'(y_i)u'(y_j)}{u(y_i)u(y_j)},& i\neq j,\\
\dfrac{u''(y_i)}{u(y_i)},& i=j.
\end{cases}
$$
We have $$U_1 (y)\simeq 1,\quad u(y_i )\simeq 1,\quad 0 < u' (y_i )< 2r',\quad u'' (y_i ) < 0,\quad \forall i.$$ Using the third bullet point in Section \ref{SSSModelSmooth}, and $u'(y_i ) = \int_d^{y_i} u''(x)dx$, we get that $|y_i-d| < 2r' \dt$,
and for each $i$ either $u'' (y_i )< -\dt^{-1}/3$ or $|u'' (y_i )| \geq |u''(x)|$ for $x \in (y_i , d)$ hence in
the latter case $|u' (y_i )| = | \int_d^{y_i}u'' (x)\,dx| \leq  2r'\dt|u'' (y_i )|$. This implies that $D^2 U_1$ is
diagonally dominant and negative definite. By rescaling and linear transformations,
we get that $D^2 H_s (p^+_s)$ is negative definite for all $s \in \R$.

{\bf Step 3, the action estimate.}

For each $s$, we denote by $\hat y_s$ the point satisfying $DF_s (y) = 0$ and closest to $y^+_s$ and denote $\hat p_s = \mathbf A\hat y_s$. We have $\hat y_{s,i} < 1$ for all $i = 1, 2, \ldots, n$ and $U_s (\hat y_s ) = 1$. Hence
we have $H_s (\hat p_s ) = H_s (p^*)$ and $\langle\hat p_s, \al\rangle = \langle \hat y_s , \beta\rangle < \langle\mathbf 1, \beta\rangle = \langle p^*, \al\rangle$. To get the action estimate in the lemma (the third bullet point), it is enough to prove 
$$H_s(p_s^+)-\langle p_s^+,\al\rangle>H_s(\hat p_s)-\langle \hat p_s,\al\rangle> H_s(p^*)-\langle p^*,\al\rangle>c-\langle p^*,\al\rangle.$$
   The latter two inequalities are clear. We work on the first one. We denote $\hat p_s-p_s^+=h$. We then get the following using Taylor expansion
\begin{equation*}
\begin{aligned}
H_s(\hat p_s)-H_s(p_s^+)-(\langle\hat p_s,\al\rangle-\langle p_s^+,\al\rangle)&=H_s(\hat p_s)-H_s(p_s^+)-\langle h,DH_s(p_s^+)\rangle\\
&=\frac{1}{2}\left\langle h,\int_0^1(1-t)D^2H_s(p_s^++th)\,dt\,h \right\rangle<0,
\end{aligned}
\end{equation*}
where the concavity of $D^2 H_s$ along the line segment $p_s^+ + th$ follows from Step 2.

{\bf Step 4, the inequality satisfied by $p^−_s$.}

When $s \geq 1$, we consider possible solutions $p_s^-= \mathbf Ay^-_s$ of $DH_s(p_s)=\al$, other than $p_s^+$ , i.e. $p_s^- \notin \mathcal D(d_s ,\dt_s)$. There must be one entry $y_j$ of $y_s^-$ satisfying $u_s(y_j)<1/2$ since otherwise $\frac{\partial F_s}{\partial y_j}(y_s^-)$ is too large to be $\beta_j$. We then get\begin{equation*}
\begin{aligned}
H_s(p_s^-)-\langle p_s^-,\al\rangle &=F_s(y_s^-)-\langle y_s^-,\beta\rangle<F_s(y_s^-)-y_j\beta_j\\
&\leq F_s(y_s^-)\frac{u_s(y_j)-y_ju_s'(y_j)}{u_s(y_j)}.
\end{aligned}
\end{equation*}
Notice that $F_s(y)>0$ and $u_s(y_j)=\int_0^{y_j} u_s'(t)\,dt<y_j u_s'(y_j)$ since $u_s'$ is monotone and $u''_s\geq 0$. This shows that $H_s(p_s^-)-\langle p_s^-,\al\rangle<0$.

For each $s\in \R$, in the region where $H_s (p) \leq 0$, we always have $H_s (p) -\langle p, \al\rangle < 0,$ 
hence we consider only the region where $H_s (p) > 0$. We pick one such $p_s^-$ for $s = 1$, if any, and consider its continuation to $s < 1$. The horizontal translation for $s \in [0, 1]$ in Section 3.2.3 decreases the action of $p^-_s$ when $s$ decreases. The homotopy for
$s \in [-1, 0]$ does not change $p_s^-$ or $H_s (p^-_s)$ hence does not change the action. The action gets further decreased for $s < -1$.

We next show that as $s$ goes from $1$ to $-\infty$, no new solutions of the equation $DH_s (p) = \al$ with $H_s (p) > 0$ can occur. Consider for instance, $F_s$ with $s <-1$ and
$y_i > 1$ for some $i$. We have $\frac{\partial F_s}{\partial y_i} \leq 0$ for the same $i$ since we have $\frac{\partial V_s}{\partial y_j}(y)\leq  0$ if $y_j \geq 0$, for all $j=1,\ldots,n$. The fact that $\beta_j > 0$, for all $j = 1, \ldots, n$, excludes the possibility of $DF_s (y) = \beta$
having solutions with $y_i > 1$ for some $i$. The same argument applies to all the cases $s \in (-\infty, 0].$

{\bf Step 5, the cut-off $W_s$.}

Finally, let us consider the effect of the cut-off $W_s$. We only need to consider the region where $p\in \cC$ and $\|\mathbf A^{-1} p\|/R \simeq 1$ for large $R$.
           
First consider the case $F_s (y) \geq 0$ where $y =\mathbf A^{-1} p$ lies in the first quadrant, hence $s \geq -1$. We have
$$\frac{\partial H_s}{\partial y}=\frac{\partial F_s}{\partial y}W_s+F_sw_s'\frac{y}{\|y\|R}.$$
Since we have $\|y\|/R \simeq 1$, there must be at least one $j$ such that $y_j > 1$, so that
$u'_s (y_j ) = 0$ hence $\frac{\partial F_s}{\partial y_j}=0$. Moreover we have $w'_s (x) \leq 0$ for $x \geq 0$. Therefore we
have $\frac{\partial H_s}{\partial y_j}\leq 0$ for this $j$, since none of the entries of $F_s w_s'\frac{y}{\|y\|R}$ is positive. However,
we require $\beta_i > 0$ for all $i = 1, 2, \ldots, n$.

The case $F_s \leq 0$ implies $F_s - \langle p, \al\rangle < 0$. The proof is now complete.
\end{proof}

\section{Proof of Theorem \ref{ThmCorner}}\label{sctProof}
In this section, we proof Theorem \ref{ThmCorner} using Lemma \ref{LmLocation} and the machinery set up in Section \ref{sctFloer}.
\subsection{Computation of the action}\label{ssctComp}
We obtain Morse-Bott manifolds corresponding to $p_s^\pm$ denoted by $P_s^\pm$. These Morse-Bott manifolds are Lagrangian tori $\T^n$. Along each periodic orbit $x\subset P_s^\pm$ we evaluate the action
\[\mathcal{A}_{H_s}(x)=\int_0^1 H_s-\langle p,\dot q\rangle \,dt=H_s(p_s^\pm)-\langle p_s^\pm,\al\rangle.\]

\subsection{Proof of the main theorem }\label{ssctProof}
In this section, we prove Theorem \ref{ThmCorner}.
We split the proof into three steps.

{\bf Step 1.} 
{\it Suppose $a\in[0, c-\langle p^*, \al\rangle]$. Then we have $ \iS^{[a,\infty)}(\cC_{\mathbf A,R};\alpha) \cong
  H_*(\T^n;\Z_2) $. Moreover, for all $s\in \R$, the homomorphism
  $$
  \pi_s:\iS^{[a,\infty)}(\cC_{\mathbf A,R};\alpha) \to
  \FH^{[a,\infty)}(H_s;\alpha)
  $$
  is an isomorphism.  }

 Notice that component-wise $$y_{s,i}^+=(\mathbf A^{-1}p_{s}^+)_i< 1=(\mathbf A^{-1} p^*)_i\quad\mathrm{and\ }(\mathbf A^T\al)_i>0,\quad \forall \ s\in\R,\ \forall\ i.$$ Hence
\[\langle p^+_s,\al\rangle =\langle y_s^+,\mathbf A^T\al\rangle<\langle \mathbf 1,\mathbf A^T\al\rangle=\langle p^*,\al\rangle.\]
This means that when $c-\langle p^+_s,\al\rangle> c-\langle p^*,\al\rangle\geq a,$ we have 
\begin{equation}\label{EqSqueezea}
\mathcal A_{H_s}(P_s^-)<0\leq a\leq c-\langle p^*,\al\rangle< \mathcal A_{H_s}(P_s^+),\quad \forall\ s\in \R
\end{equation}
when $a$ satisfies $0\leq a\le c-\langle p^*,\al\rangle.$
Hence, by Theorem~\ref{ThmMB}, $\FH^{[a,\infty)}(H_s;\alpha)
\cong H_*(\T^n;\Z_2)$ since the Morse-Bott manifold $P_s^+$ is a torus, and by Proposition \ref{PropHomotopy} the
monotone homomorphism $\sigma_{H_{s_1}H_{s_0}}$ in \eqref{EqHomo}
is an isomorphism. We now apply Lemma~\ref{LmExhaust} to complete Step 1.



{\bf Step 2.}
{\it Suppose $a\in[0, c-\langle p^*,\al\rangle].$ Then $ \dS^{[a,\infty);c}(\cC_{\mathbf A,R},p^*;\alpha) \cong
  H_*(\T^n;\Z_2).  $ Moreover, for all $s \in\R$ the homomorphism
  $$
  \iota_s:\FH^{[a,\infty)}(H_s;\alpha) \to
  \dS^{[a,\infty);c}(\cC_{\mathbf A,R},p^*;\alpha)
  $$
  is an isomorphism.  }
  
The same argument as the Step 1 with the help of the action computation in Lemma \ref{LmLocation} and
Lemma \ref{LmExhaust} gives us Step 2.

{\bf Step 3.}
{\it Suppose $a\in[0,  c-\langle p^*,\al\rangle],$ Then the homomorphism in Proposition \ref{PropFactor}
  $$
  T^{[a,\infty);c}_\alpha: \iS^{[a,\infty)}(\cC_{\mathbf A,R};\alpha) \to
  \dS^{[a,\infty);c}(\cC_{\mathbf A,R},p^*;\alpha)
  $$
  is an isomorphism.}
  
  By Proposition \ref{PropFactor}, we have $T^{[a,\infty);c}_\alpha=\iota_s\circ\pi_s$ for all $s\in \R$. Then Step 3 follows from Step 1 and 2. 
 
According to the definition of $\widehat{C}(\cC_{\mathbf A,R},p^*;\al,a)$ in \eqref{EqCapacityHat}, we get that for $a\in [0,c-\langle p^*,\al\rangle]$ \[\widehat{C}(\cC_{\mathbf A,R},p^*;\al,a)\leq a+\langle p^*,\al\rangle.\] Applying Proposition \ref{PropCap}, we get \[C(\cC_{\mathbf A,R},p^*;\al,a)\leq \widehat{C}(\cC_{\mathbf A,R},p^*;\al,a)\leq a+\langle p^*,\al\rangle<\infty.\] This implies that periodic orbits exist for all $H\in \sH_c(\cC_{\mathbf A,R},p^*)$. To complete the proof of Theorem \ref{ThmCorner}, it is enough to choose the $R$ in the definition of $\cC_{\mathbf A,R}$ to be large enough so that supp$H\subset \cC_{\mathbf A,R}.$

\section{Proof of Theorem \ref{ThmDense} and \ref{ThmArnold}}\label{sctProofs}
\subsection{Dense existence}\label{ssctDense}
In this section we prove Theorem \ref{ThmDense}. The argument follows that of Theorem 3.4.1 of \cite{BPS}. 

We show that for each  $a,b$ satisfying $\min_q H(p^*,q)>b>a>0$, there exists $s\in (a,b)$ such that the level set $\{H=s\}$ carries a closed orbit in the class $\al$. Define a smooth function $\sigma:\R\to\R$ with the following properties:
\begin{itemize}
\item $\sigma(r)=0$,\ for $r\leq 0$,
\item $\sigma(r)=1$, for $r\geq 1$,
\item $\sigma'(r)>0$, for $0<r<1$.
\end{itemize}
Picking a constant $c\geq \langle p^*,\al\rangle$ and define $F:=c\sigma \left(\frac{H-a}{b-a}\right)$, which can be verified to satisfy the assumption of Theorem \ref{ThmCorner}.
We apply Theorem \ref{ThmCorner} to get that $F$ has a 1-periodic orbit $x$ in the class $\al$ lying on a level set of $\{F=\rho\}$ where $\rho\in (0,c)$. Since $c\sigma:\ (a,b)\to(0,c)$ is injective, there exists $s\in (a,b)$ such that $\{F=\rho\}=\{H=s\}$, hence $x$ lies on the level set $\{H=s\}.$
\subsection{Arnold's problem}\label{ssct:ThmArnold}
In this section, we prove Theorem \ref{ThmArnold}.

We consider the cone $\cC$ determined by the matrix $A=\left[\begin{array}{cc}1&1\\
-1&1\end{array}\right]$. 
In this case, we want to find periodic orbits in a homology class $\al\in \cC^*\cap H_1(\T^2,\Z)$. To get the other homology classes in the statement of the theorem, it is enough to rotate the cone $\cC$ by $\pi/2,\pi,3\pi/2$.  In all the four cases, the cone $\cC$ is the same as its dual cone $\cC^*$. Denote by $\displaystyle M:=-\min_{q\in \T^2}V(q)\geq 0$. We can always find $p^*\in \cC$ and $a,b $ such that \begin{equation}\label{EqArnoldIneq}
H(p^*,q)= \frac{(p^*_1)^2}{2}-\frac{(p^*_2)^2}{2}+V(q)\geq \frac{(p^*_1)^2}{2}-\frac{(p^*_2)^2}{2}-M>b>a>\max_q V(q)=0.
\end{equation}
We further choose $p^*$ such that $c=\langle p^*,\al\rangle> 0$ and define a Hamiltonian function using $\sigma$ in Section \ref{ssctDense} \[F(p,q)=\begin{cases}c\cdot \sigma\left(\dfrac{H(p,q)-a}{b-a}\right)\cdot W_1(p),\quad &p\in \cC,\\
0,& p\in \R^2\setminus \cC,
\end{cases}\]
where $W_1(p)$ is the cut-off function introduced in \eqref{EqUVW} with $s=1$ where $R$ can be chosen as large as we wish. We get that $F$ satisfies the assumption of Theorem \ref{ThmCorner} using \eqref{EqArnoldIneq} since $H(p^*,q)>b,$ and $H(p,q)|_{p\in\partial \cC}=V(q)\leq 0<a$.

We apply Theorem \ref{ThmCorner} to $F$ to get a $1$-periodic orbit in the homology class $\al$ with period one. Let us assume for a moment that the periodic orbit is not created by $W_1\neq 1$. Namely, $\Vert p\Vert/R$ is not close to 1. 

We get a periodic orbit on the energy level $\{H=s\}$ where $s\in (a,b)$. Since $b>a$ can be arbitrary numbers satisfying \eqref{EqArnoldIneq}, we also get dense existence. Namely, there exists a dense subset $S_\al\subset\left(0,\frac{(p^*_1)^2}{2}-\frac{(p^*_2)^2}{2}-M\right)$, such that for each $s\in S_\al$, the energy level $\{H=s\}$ contains a periodic orbit with homology class $\al$.  The argument can be done for any $p^*\in \cC$ satisfying \eqref{EqArnoldIneq}, so we get dense existence in the set of energy levels $(0,\infty).$ 

Finally, we show that the periodic orbit is not created by $W_1\neq 1$. We assume $\Vert p\Vert/R\simeq 1$. We only need to consider $p\in \cC$, since $F(p,q)=0$ when $p\in \R^2\setminus \cC$. We have the Hamiltonian equations
\begin{equation}\label{EqFinal}
\begin{aligned}
&\dot p=-\frac{\partial F}{\partial q}=\dfrac{-c}{b-a}\sigma'\cdot W_1\cdot \frac{\partial V}{\partial q},\\
&\dot q=\frac{\partial F}{\partial p}=\dfrac{c}{b-a}\sigma'\cdot W_1\cdot \frac{\partial H}{\partial p}+c\sigma\cdot w_1'\cdot \dfrac{p}{\Vert p\Vert R}.
\end{aligned}
\end{equation}
Assume first that $\frac{p_1^2}{2}-\frac{p_2^2}{2}>b+M$, then $H(p,q)>b$ so that $\sigma\left(\frac{H(p,q)-a}{b-a}\right)=1$ and $\sigma'=0$. So we get $\dot p=0,$ and $\dot q=c\sigma\cdot w_1'\cdot \frac{p}{\Vert p\Vert R}$. For $R$ large enough, $\Vert\dot q\Vert$ can be made as small as we wish. On the other hand, $\Vert\al\Vert$ is bounded away from zero for $\al \in \cC^*\cap H_1(\T^2,\Z)$. Hence no periodic orbit in homology $\al$ exists in this case. Notice that once a periodic orbit is known to intersect the region $\{\frac{p_1^2}{2}-\frac{p_2^2}{2}>b+M\},$ it in fact always stays there because of $\dot p=0$ and the periodicity. 

It remains to consider a periodic orbit with $\frac{p_1^2}{2}-\frac{p_2^2}{2}\leq b+M$ during time 1. When $\Vert p\Vert/R\simeq 1$, since $\dot p$ is bounded, we must have for all the time either \[|p_1-p_2|\leq 4(b+M)/R,\ |p_1+p_2|\geq R/2\quad \mathrm{or}\quad |p_1+p_2|\leq 4(b+M)/R,\ |p_1-p_2|\geq R/2.\] In the $\dot q$ equation of \eqref{EqFinal}, the factor $\frac{c}{b-a}\sigma'\cdot W_1$ in front of $\frac{\partial H}{\partial p}=(p_1,-p_2)^T$ is bounded, and the second term $c\sigma\cdot w_1'\cdot \frac{p}{\Vert p\Vert R}$ has as small norm as we wish by choosing $R$ large enough. 
For large enough $R$, either $p_1+p_2$ or $p_1-p_2$ is close to zero. This implies that either $\dot q_1-\dot q_2$ or $\dot q_1+\dot q_2$ is close to zero. However, since we assume $\al\in \cC^*\cap H_1(\T^2,\Z)$ (remember that $\cC^*$ is open), we have that both $|\al_1+\al_2|$ and $|\al_1-\al_2|$ are bounded from below by $1$. Hence a 1-periodic orbit of $F$ with $\frac{p_1^2}{2}-\frac{p_2^2}{2}\leq b+M$ and $\Vert p\Vert/R\simeq 1$ cannot have homology class $\al$ for $R$ large enough. This completes the proof. 
\section*{Acknowledgment}
I would like to thank Prof. L. Polterovich for introducing me to the problem, valuable suggestions and constant encouragements.
I would also thank Prof. A. Wilkinson for reading the manuscript carefully and important remarks which led to the dense existence theorem and the discovery of Theorem \ref{ThmArnold}. My thanks also go to my former coadvisor V. Kaloshin from whom I learnt the problem of Arnold. I would like to thank the referee warmly for his/her patient work on pointing out some mistakes in the previous version and improving the readability of the paper. The work is supported by an NSF grant: DMS-1500897.


\begin{thebibliography}{DGO}
\def\bi#1{\bibitem[#1]{#1}}
\bi{A} Arnold, V. I. {\it Mathematical problems in classical physics}. Trends and perspectives in applied mathematics,  Appl. Math. Sci., vol. 100, Springer, New York, (1994), pp. 1-20. 
\bi{B} M. Batoreo, {\it On non-contractible periodic orbits of symplectomorphisms}, Arxiv:
    1507.04297v2.
\bi{BH} D. Burghelea, S. Haller, {\it Non-contractible periodic trajectories of symplectic vector fields,
      Floer cohomology and symplectic torsion,} Arxiv: 0104013.
\bi{BPS} Biran, Paul, Leonid Polterovich, and Dietmar Salamon. {\it Propagation in Hamiltonian dynamics and relative symplectic homology.} Duke Mathematical Journal 119.1 (2003): 65-118.
\bi{FHS} Floer, Andreas, Helmut Hofer, and Dietmar Salamon. {\it Transversality in elliptic Morse theory for the symplectic action.} Duke Mathematical Journal 80.1 (1995): 251-292.
\bi{G} Ginzburg, Viktor L. {\it The Weinstein conjecture and theorems of nearby and almost existence.} The breadth of symplectic and Poisson geometry. Birkh\"auser Boston, 2005. 139-172.
\bi{G13} B.Z. Gur\"el, {\it On non-contractible periodic orbits of Hamiltonian diffeomorphisms}, Bull. Lond. Math. Soc., 45 (2013), 1227-1234.
\bi{G14} B.Z. Gur\"el, {\it Periodic orbits of Hamiltonian systems linear and hyperbolic at infinity}, Pacific J. Math., 271 (2014), 159-182.
\bi{GG1} V.L. Ginzburg, B.Z.Gur\"el, {\it The Conley conjecture and beyond,} Arnold Mathematical Journal, September 2015, Volume 1, Issue 3, pp 299-337.
\bi{GG2} V.L. Ginzburg, B.Z. Gur\"el, {\it Non-contractible Periodic Orbits in Hamiltonian Dynamics on Closed Symplectic Manifolds}, arXiv:1503.07145
\bi{GL} Gatien, Daniel, and Fran\c{c}ois Lalonde.  {\it Holomorphic cylinders with Lagrangian boundaries and Hamiltonian dynamics.} Duke Mathematical Journal 102.3 (2000): 485-512.
\bi{HV} Hofer, Helmut, and C. Viterbo. {\it The Weinstein conjecture in the presence of holomorphic spheres.} Communications on pure and applied mathematics 45.5 (1992): 583-622.
\bi{L} Lee, Yi-Jen. {\it Non-contractible periodic orbits, Gromov invariants, and Floer-theoretic torsions.} arXiv preprint math/0308185 (2003).
\bi{M} Mather, John N. {\it Action minimizing invariant measures for positive definite Lagrangian systems.} Mathematische Zeitschrift 207.1 (1991): 169-207.
\bi{N}   C.J. Niche, {\it Non-contractible periodic orbits of Hamiltonian flows on twisted cotangent bundles}, Discrete Contin. Dyn. Syst., 14 (2006), 617-630.
\bi{P} Polterovich, Leonid. {\it Symplectic intersections and invariant measures}, Annales math\'{e}matiques du Qu\'{e}bec
(2014)
\bi{V} Vichery, Nicolas. {\it Spectral invariants towards a Non-convex Aubry-Mather theory}, arXiv preprint math/1403.2058(2014)
\bi{SW} Saloma\~{o}, Pedro A.S., Weber, Joa. {\it An almost existence theorem for non-contractible periodic orbits in cotangent bundles}, S\~{a}o Paulo Journal of Mathematical Sciences, 6, no. 2, (2012), pp. 385-394.
\bi{W} Weber, Joa. {\it Noncontractible periodic orbits in cotangent bundles and Floer homology.} Duke Mathematical Journal 133.3 (2006): 527-568.
\end{thebibliography}
\end{document}